\documentclass[10pt]{amsart}
\usepackage{amsmath}
\usepackage{amssymb}
\usepackage{enumerate}
\usepackage{enumitem}
\usepackage{anysize}
\marginsize{1.25cm}{1.25cm}{1.75cm}{2.5cm}

\theoremstyle{plain}
\newtheorem{theor}{Theorem}[section]
{}
\newtheorem{lemma}[theor]{Lemma}
\newtheorem{propo}[theor]{Proposition}
\newtheorem{corol}[theor]{Corollary}

\theoremstyle{definition}
\newtheorem{defin}[theor]{Definition}

\newtheorem{notat}[theor]{Notation}
\newtheorem{quest}[theor]{Question}

\newtheorem{claim}[theor]{Claim}

\theoremstyle{remark}
\newtheorem{rem}[theor]{Remark}

\numberwithin{equation}{section}

\newcommand{\restrict}{\mathord{\upharpoonright}}

\renewcommand{\c}{\mathfrak{c}}
\renewcommand{\b}{\mathfrak{b}}
\renewcommand{\d}{{\mathfrak{d}}}
\newcommand{\s}{\mathfrak{s}}

\newcommand{\ttt}{{\mathfrak{t}}}
\renewcommand{\a}{{\mathfrak{a}}}
\newcommand{\ac}{{\mathfrak{a}}_{closed}}
\newcommand{\lsb}{\left[}
\newcommand{\rsb}{\right]}

\newcommand{\lc}{\left|}
\newcommand{\rc}{\right|}

\newcommand\ZFC{\mathrm{ZFC}}
\newcommand\FIN{\mathrm{FIN}}

\DeclareMathOperator{\spt}{suppt}

\DeclareMathOperator{\Lim}{Lim}
\renewcommand{\[}{\left[}
\renewcommand{\]}{\right]}
\newcommand{\Pset}{\mathcal{P}}

\newcommand{\BB}{\mathcal{B}}

\newcommand{\GG}{{\mathcal{G}}}

\newcommand{\cube}{{\lsb\omega\rsb}^{\omega}}

\newcommand{\I}{{\mathcal{I}}}

\newcommand{\J}{{\mathcal{J}}}

\newcommand{\FFF}{{\mathcal{F}}}
\newcommand{\FF}{{\mathbb{F}}}

\newcommand{\KK}{{\mathcal{K}}}

\newcommand{\pr}[2]{\langle #1, #2 \rangle}

\newcommand{\seq}[4]{\langle {#1}_{#2}: #2 #3 #4 \rangle}
\newcommand{\upp}[3]{\displaystyle\coprod_{#1 \in #2}{{#3}_{#1}}}
\newcommand{\upi}[3]{\displaystyle\coprod_{#1}{{#2}_{#3}}}

\DeclareMathOperator{\dom}{\bold{domain}}

\newcommand{\AND}{\text{ and }}

\newcommand{\SetOf}[2]{\left\{#1 \ \left| \ #2 \right.\right\}}

	\title[The almost disjointness invariant for products]{The almost disjointness invariant for products of ideals}
 \author[D. Raghavan]{Dilip Raghavan}
 \address{Department of Mathematics\\
  National University of Singapore\\
  Singapore 119076.}
 \email{dilip.raghavan@protonmail.com}
 \urladdr{https://dilip-raghavan.github.io/}
	\author[J. Stepr\={a}ns]{Juris Stepr\={a}ns}
	\address{Department of Mathematics\\
  York University\\
	4700 Keele Street\\
	Toronto, Ontario, Canada M3J 1P3.}
	\curraddr{}
	\email{steprans@yorku.ca}
\begin{document}
 \begin{abstract}
  The almost disjointness numbers associated to the quotients determined by the transfinite products of the ideal of finite sets are investigated.
  A $\mathord\mathrm{ZFC}$ lower bound involving the minimum of the classical almost disjointness and splitting numbers is proved for these characteristics. 
  En route, it is shown that the splitting numbers associated to these quotients are all equal to the classical splitting number.
  Finally, it is proved to be consistent that the almost disjointness numbers associated to these quotients are all equal to the second uncountable cardinal while the bounding number is the first uncountable cardinal.
  Several open problems are considered.
 \end{abstract}
\maketitle
\section{Introduction} \label{sec:intro}
Cardinal characteristics associated with definable ideals and their quotients have received considerable attention.
Two notable examples include the papers Brendle and Shelah~\cite{bs1} and Hern{\'a}ndez-Hern{\'a}ndez and Hru{\v{s}}{\'a}k~\cite{michaelstarinv}.
This paper will focus on the products of the ideal of finite subsets of $\omega$, including transfinite products, and the quotients they determine.
The main result will be a $\ZFC$ lower bound on the almost disjointness numbers associated to these quotients, and as a consequence of this $\ZFC$ lower bound, it will be shown to be consistent that $\b = {\aleph}_{1}$ while these almost disjointness numbers are all ${\aleph}_{2}$.
The next few definitions set the basic notation.
\begin{defin} \label{def:ideal}
 Given an infinite set $X$, $\I$ is said to be an \emph{ideal} on $X$ if $\I$ is a subset of $\Pset(X)$ such that the following conditions hold:
 \begin{enumerate}[series=ideal]
  \item \label{cond:nonprincipal}
  if $Y \subseteq X$ is finite, then $Y \in \I$;
  \item
  if $Y \in \I$ and $Z \subseteq Y$, then $Z \in \I$;
  \item
  if $Y \in \I$ and $Z \in \I$, then $Y \cup Z \in \I$;
  \item \label{cond:proper}
  $X \notin \I$.
 \end{enumerate}
Conditions (\ref{cond:nonprincipal}) and (\ref{cond:proper}) are often expressed as $\I$ is \emph{non-principal} and $\I$ is \emph{proper} respectively.
For an ideal $\I$ on $X$ and $A$ and $B$ subsets of $X$
define $A\equiv_{\mathcal I}B$ if and only if $A\triangle B\in \mathcal I$ and define
$A\subseteq_{\mathcal I}B$ if $A\setminus B\in \mathcal I$.
Then
$\Pset(X) \slash \I$ is a Boolean algebra of $\equiv_{\mathcal I}$ equivalence classes and ${\lsb Y \rsb}_{\I}$ denotes the $\equiv_{\mathcal I}$ equivalence class of $Y$, for any set $Y \subseteq X$.
A set $Y \in \Pset(X)$ is \emph{$\I$-positive} if $Y \notin \I$, equivalently ${\lsb Y \rsb}_{\I} > 0$.
This is often written as $Y \in {\I}^{+}$.
Observe that by (\ref{cond:nonprincipal}), every $\I$-positive set is infinite.

For $Y \in {\I}^{+}$, the \emph{restriction of $\I$ to $Y$}, denoted $\I\restrict Y$ is $\{Z \in \I: Z \subseteq Y\}$.
It is easy to see that $\I\restrict Y$ is an ideal on $Y$.
Attention will be restricted to ideals with the property that $\Pset(X) \slash \I$ is non-atomic, which is an easy consequence (as will be seen in Proposition~\ref{prop:nonmaximalnonatomic}) of the following additional condition:
\begin{enumerate}[resume=ideal]
 \item 
 for every $Y \subseteq X$, if $Y \notin \I$, then $\I \restrict Y$ is not a maximal ideal on $Y$.
\end{enumerate} 
 All ideals to be considered in this paper will enjoy this property.
\end{defin}
It is worth bearing in mind that an ideal $\I$ on a set $X$ is not maximal if there exist disjoint subsets $Y, Z \subseteq X$ with $Y, Z \notin \I$.
\begin{defin} \label{def:a}
 Let $\BB$ be a non-atomic Boolean algebra.
 An \emph{antichain in $\BB$} is a set $A \subseteq \BB$ such that $\forall a \in A\lsb a > 0 \rsb$ and $\forall a, a' \in A\lsb a \neq a' \implies a \wedge a' = 0 \rsb$.
 A \emph{maximal antichain in $\BB$} is a set $A \subseteq \BB$ which is an antichain in $\BB$ and which is not a proper subset of any antichain in $\BB$.
 Define 
 \begin{align*}
  {\a}_{\BB} = \min\left\{ \lc A \rc: A \subseteq \BB \ \text{is an infinite maximal antichain in} \ \BB \right\}.
 \end{align*}
\end{defin}
Note that ${\a}_{\BB}$ is well-defined for any non-atomic Boolean algebra $\BB$, for it is possible to find an infinite antichain below every $b \in \BB \setminus \{0\}$ in view of the lack of atoms.
The absence of atoms also ensures that the splitting number, define in Definition~\ref{def:s}, is well-defined for $\BB$.
\begin{defin}\label{def:s}
 $\FFF \subseteq \BB$ is called a \emph{splitting family in $\BB$} if for every $b \in \BB$ with $b > 0$, there exists $a \in \FFF$ with $b \wedge a > 0$ and $b \wedge (1-a) > 0$.
 Define
 \begin{align*}
  {\s}_{\BB} = \min\{\lc \FFF \rc: \FFF \subseteq \BB \ \text{and} \ \FFF \ \text{is a splitting family in} \ \BB \}
 \end{align*}
\end{defin}
\begin{propo} \label{prop:nonatomicsplit}
 If $\BB$ is a non-atomic Boolean algebra, then $\BB$ is a splitting family in $\BB$.
\end{propo}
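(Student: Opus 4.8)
The plan is to unwind the definitions and observe that the statement is almost immediate from non-atomicity. Fix $b \in \BB$ with $b > 0$; I must produce some $a \in \BB$ with $b \wedge a > 0$ and $b \wedge (1 - a) > 0$. Since $\BB$ is non-atomic, $b$ is not an atom, so there is $c \in \BB$ with $0 < c < b$. I will take $a = c$ and verify the two inequalities.

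The first inequality is trivial: $b \wedge a = b \wedge c = c > 0$, using $c \le b$. For the second, the key (routine) Boolean-algebra identity is that when $c \le b$ one has $b \wedge (1 - c) = b - c$, and $b - c = 0$ precisely when $b \le c$, i.e.\ (given $c \le b$) when $b = c$. Since $c < b$ strictly, $b \wedge (1 - c) = b - c > 0$. Hence $a = c$ witnesses that $b$ is split by a member of $\BB$, and since $b > 0$ was arbitrary, $\BB$ is a splitting family in $\BB$.

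There is no real obstacle here; the only point requiring a line of care is the identity $b \wedge (1 - c) = b - c$ for $c \le b$ and the observation that this is nonzero exactly because $c$ was chosen \emph{strictly} below $b$. This proposition is essentially a remark recording that the quantity $\s_{\BB}$ of Definition~\ref{def:s} is well defined (the family one minimizes over is nonempty) for every non-atomic $\BB$; the same element $c$ furnished by non-atomicity also shows, below any given positive $b$, that the collection of positive elements below $b$ has no minimal element, which is the fact that will be reused implicitly later.
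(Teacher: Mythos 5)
Your proof is correct, and it is the standard (essentially the only) argument: the paper states this proposition without proof, treating it as immediate from the definition of non-atomicity. Your verification that $b \wedge c = c > 0$ and $b \wedge (1-c) = b - c > 0$ for any $c$ with $0 < c < b$ is exactly the routine check the authors left to the reader.
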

Conversely, if $\BB$ has atoms, then there are no splitting families in $\BB$ as the atoms cannot be split, and hence ${\s}_{\BB}$ is not well-defined.
Condition (5) of Definition \ref{def:ideal} ensures that $\Pset(X) \slash \I$ will be non-atomic.
\begin{propo} \label{prop:nonmaximalnonatomic}
 Let $\I$ be an ideal on an infinite set $X$ satisfying condition (5) of Definition \ref{def:ideal}.
 Then $\Pset(X) \slash \I$ is non-atomic.
\end{propo}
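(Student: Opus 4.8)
The plan is to check directly the definition of \emph{non-atomic}: I will show that below every nonzero element of $\Pset(X)\slash\I$ there is a strictly smaller nonzero element, so that no nonzero element is minimal. Since the nonzero elements of $\Pset(X)\slash\I$ are exactly the classes ${\lsb Y\rsb}_{\I}$ with $Y\in{\I}^{+}$ (noted in Definition~\ref{def:ideal}), it suffices to fix an arbitrary $Y\in{\I}^{+}$ and produce $A\subseteq Y$ with $0 < {\lsb A\rsb}_{\I} < {\lsb Y\rsb}_{\I}$.

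To find such an $A$, I would apply condition~(5) of Definition~\ref{def:ideal} to $Y$: the restriction $\I\restrict Y$ is not a maximal ideal on $Y$. Unwinding what non-maximality of an ideal on a set means (equivalently, that its dual filter is not an ultrafilter on $Y$) — this is the content of the remark immediately following Definition~\ref{def:ideal} — yields a set $A\subseteq Y$ with $A\notin\I\restrict Y$ and, setting $B=Y\setminus A$, also $B\notin\I\restrict Y$; thus $A$ and $B$ are disjoint subsets of $Y$, neither of which lies in $\I\restrict Y$. Because $A\subseteq Y$ and $B\subseteq Y$, membership in $\I\restrict Y$ coincides with membership in $\I$, so in fact $A\notin\I$ and $B\notin\I$.

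The remainder is bookkeeping in the quotient. From $A\subseteq Y$ we get $A\setminus Y=\emptyset\in\I$, hence ${\lsb A\rsb}_{\I}\le{\lsb Y\rsb}_{\I}$, and $A\notin\I$ gives ${\lsb A\rsb}_{\I} > 0$. For strictness, $B\subseteq Y\setminus A=Y\triangle A$ together with $B\notin\I$ forces $Y\triangle A\notin\I$, i.e.\ $A\not\equiv_{\I}Y$, so ${\lsb A\rsb}_{\I}\ne{\lsb Y\rsb}_{\I}$. Therefore $0 < {\lsb A\rsb}_{\I} < {\lsb Y\rsb}_{\I}$, so ${\lsb Y\rsb}_{\I}$ is not an atom, and as $Y\in{\I}^{+}$ was arbitrary, $\Pset(X)\slash\I$ is non-atomic. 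I do not expect a real obstacle here; the only points needing a little care are the passage between the ideal $\I\restrict Y$ on $Y$ and the ideal $\I$ on $X$ (so that ``$(\I\restrict Y)$-positive in $Y$'' and ``$\I$-positive in $X$'' agree for subsets of $Y$), and spelling out why a non-maximal ideal must admit two disjoint positive sets.
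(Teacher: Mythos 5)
Your proof is correct and follows essentially the same route as the paper: both invoke condition (5) to split a positive $Y$ into two disjoint $\I$-positive pieces and conclude that $[Y]_{\I}$ is not an atom. The one step you flag as "needing care" — that a non-maximal ideal on $Y$ admits a set $Z\subseteq Y$ with both $Z$ and $Y\setminus Z$ positive — is exactly what the paper's proof spells out (take $Z$ in a properly larger proper ideal $\J$ and use properness of $\J$ to see $Y\setminus Z\notin\I$), so your argument matches theirs in substance.
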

\begin{proof}
 Consider $a \in \Pset(X) \slash \I$ with $a > 0$.
 So $a = {\[Y\]}_{\I}$ for some $Y \in \Pset(X)$ such that $Y \in {\I}^{+}$.
 By hypothesis $\I \restrict Y$ is not a maximal ideal on $Y$.
 Let $\J$ be an ideal on $Y$ so that $\I \restrict Y \subsetneq \J$.
 Fix $Z \in \J \setminus (\I \restrict Y)$.
 $Z \subseteq Y$ because $\J$ is an ideal on $Y$ and $Z \in \J$.
 As $\J$ is a proper ideal, $Y\setminus Z \notin \I\restrict Y$.
 Thus $Y\setminus Z \notin \I$ and $Z \notin \I$.
 Hence $0 < {\[Z\]}_{\I}, {\[Y\setminus Z\]}_{\I} \leq {\[Y\]}_{\I}$.
 Further, ${\[Y \setminus Z\]}_{\I} \wedge {\[Z\]}_{\I} = {\[(Y\setminus Z) \cap Z\]}_{\I} = 0$, showing that $a$ is not an atom.
\end{proof}
When $\BB$ is of the form $\Pset(X) \slash \I$, ${\a}_{\Pset(X)\slash \I}$ and ${\s}_{\Pset(X)\slash \I}$ will sometimes be rewritten as ${\a}_{\I}$ and ${\s}_{\I}$.
The Fubini square (see Definition~\ref{def:J}) of the ideal $\FIN$ of finite subsets of $\omega$ is usually denoted as $\FIN \times \FIN$ and it is most naturally viewed as an ideal on $\omega \times \omega$.
The quotient $\Pset(\omega \times \omega) \slash \left( \FIN \times \FIN \right)$ and its cardinal characteristics have been studied by several researchers.
A notable difference between $\Pset(\omega) \slash \FIN$ and $\Pset(\omega \times \omega) \slash \left( \FIN \times \FIN \right)$, first noticed by Szyma\'{n}ski and Zhou~\cite{szzh}, is that the tower number ${\ttt}_{\Pset(\omega \times \omega) \slash \left( \FIN \times \FIN \right)}$ is provably equal to ${\aleph}_{1}$ in $\ZFC$.

In an unpublished work~\cite{Brend}, Brendle investigated the almost disjointness number of the quotient $\Pset(\omega \times \omega) \slash \left( \FIN \times \FIN \right)$.
He observed that $\b \leq {\a}_{\Pset(\omega \times \omega) \slash \left( \FIN \times \FIN \right)} \leq \a$ and by using a template style iteration along the lines of Shelah~\cite{sh:700}, he was able to show the consistency of $\b = \d < {\a}_{\Pset(\omega \times \omega) \slash \left( \FIN \times \FIN \right)}$.
\begin{theor}[Brendle~\cite{Brend}]\label{thm:Brend}
 It is consistent that ${\aleph}_{2} = \b = \d < {\a}_{\Pset(\omega \times \omega) \slash \left( \FIN \times \FIN \right)}$.
\end{theor}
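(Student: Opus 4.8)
The plan is to adapt Shelah's method of finite-support linearly ordered (template) iterations from \cite{sh:700} to the quotient $\Pset(\omega\times\omega)\slash(\FIN\times\FIN)$. Working over a ground model of $\GCH$, I aim for a model in which $\c=\a_{\FIN\times\FIN}=\aleph_3$ while $\b=\d=\aleph_2$; this is forced to be the shape of the answer, since $\a_{\FIN\times\FIN}>\aleph_2$ already requires $\c\ge\aleph_3$. The basic building block is a \emph{$(\FIN\times\FIN)$-Hechler forcing} $\Q_{\mathcal A}$, defined for an arbitrary $(\FIN\times\FIN)$-almost disjoint family $\mathcal A$ (a set $\mathcal A\subseteq\Pset(\omega\times\omega)$ whose members have pairwise $(\FIN\times\FIN)$-small intersections): conditions are pairs $(s,F)$ in which $s\in\FIN\times\FIN$ is a finite approximation to the generic set $\dot D$, organized (say, as a finite union of pieces $\{k\}\times t$ with $t$ finite) so as to force $\dot D$ to contain infinitely many infinite columns and hence be $(\FIN\times\FIN)$-positive, and $F\in[\mathcal A]^{<\omega}$ is a finite ``promise'', the extension relation requiring that columns of $\dot D$ added after $F$ has been promised meet every $A\in F$ in a finite set. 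Genericity then makes $\dot D$ a $(\FIN\times\FIN)$-positive set almost disjoint from every member of $\mathcal A$, so that after forcing with $\Q_{\mathcal A}$ the family $\mathcal A$ is no longer a maximal antichain. One checks that $\Q_{\mathcal A}$ is $\sigma$-centered --- conditions with a common first coordinate are compatible --- regardless of $|\mathcal A|$, and (the one genuinely quotient-specific point) that its definition can be arranged so that $\dot D$ is \emph{provably} $(\FIN\times\FIN)$-positive.

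Next I would fix a template $\mathbb T=(L,\mathcal I)$ on a set $L$ of size $\aleph_3$ for which the associated iteration $\P=\P_L$ is a well-defined ccc forcing of size $\aleph_3$, and with two further features. First, every subset of $L$ of size $\le\aleph_2$ is contained in an $\mathcal I$-closed set $L'$ of the same size, so that $\P_{L'}\lessdot\P$ and hence every real --- and so every family of at most $\aleph_2$ reals --- in $\V^{\P}$ already lies in $\V^{\P_{L'}}$ for some such $L'$; moreover for every $\mathcal I$-closed $L'$ with $|L'|\le\aleph_2$ there are $\aleph_3$-many points of $L$ lying above $L'$ in $\mathbb T$. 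Second, $L$ carries a distinguished cofinal chain of order type $\omega_2$. Along $\mathbb T$ one then runs a bookkeeping over the (by $\GCH$, exactly $\aleph_3$-many) pairs $(L',\dot{\mathcal A})$ in which $L'$ is $\mathcal I$-closed of size $\le\aleph_2$ and $\dot{\mathcal A}$ is a $\P_{L'}$-name for an almost disjoint family of size $\le\aleph_2$, arranging that each such pair is caught at some point $x\in L$ above $L'$, where one forces with $\Q_{\dot{\mathcal A}}$; at the points of the distinguished $\omega_2$-chain one instead forces with ordinary Hechler (dominating) forcing, producing a $\le^{*}$-increasing sequence $\langle d_\alpha:\alpha<\omega_2\rangle$.

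Granting all this, the verification is short. For $\a_{\FIN\times\FIN}=\aleph_3$: let $\mathcal A$ be any $(\FIN\times\FIN)$-almost disjoint family of size $\le\aleph_2$ in $\V^{\P}$; it lies in $\V^{\P_{L'}}$ for some $\mathcal I$-closed $L'$ of size $\le\aleph_2$, and by the bookkeeping a copy of $\Q_{\dot{\mathcal A}}$ with $\dot{\mathcal A}$ naming $\mathcal A$ was used at a point above $L'$, adding a $(\FIN\times\FIN)$-positive $D$ almost disjoint from every member of $\mathcal A$; since ``$D$ is $(\FIN\times\FIN)$-positive'' and ``$D\cap A\in\FIN\times\FIN$'' are absolute between the model in which they are decided and $\V^{\P}$, the set $\mathcal A\cup\{D\}$ shows $\mathcal A$ is not maximal. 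Thus every infinite maximal antichain in $\Pset(\omega\times\omega)\slash(\FIN\times\FIN)$ has size $\aleph_3=\c$. For $\b=\d=\aleph_2$: the sequence $\langle d_\alpha:\alpha<\omega_2\rangle$ is $\le^{*}$-increasing, and by genericity together with the cofinality of the chain it is dominating in $\V^{\P}$; this last step is where one invokes the preservation theorem for finite-support linearly ordered iterations (adapting Shelah's argument in \cite{sh:700}), to the effect that no coordinate forcing --- all of them $\sigma$-centered and, suitably set up, scale-preserving --- adds a real escaping the scale. Hence $\d\le\aleph_2$; and since any $\aleph_1$-sized family of reals is then, by $\cf(\omega_2)>\omega_1$, dominated by a single $d_\alpha$, also $\b\ge\aleph_2$. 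Combining, $\aleph_2=\b=\d<\aleph_3=\a_{\FIN\times\FIN}$.

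The main obstacle is the construction of the template $\mathbb T$ itself, which must simultaneously carry enough thin $\mathcal I$-closed substructures of size $\aleph_2$ for the capturing argument, leave $\aleph_3$-many points above each of them at which to insert a $(\FIN\times\FIN)$-Hechler step, carry the cofinal $\omega_2$-chain of Hechler forcings, and satisfy the structural (well-foundedness and $\mathcal I$-closure) requirements that make $\P_L$ ccc --- showing these demands are jointly satisfiable is the intricate core of Shelah's method, and reworking it in the present setting, together with checking that the $\Q_{\dot{\mathcal A}}$-steps do not disturb the domination structure, is where essentially all the effort goes. By comparison the $\sigma$-centeredness and scale-preservation of $\Q_{\mathcal A}$, the absoluteness remarks, and the counting of bookkeeping tasks are routine, and the only strictly new ingredient is the design of $\Q_{\mathcal A}$ so that its generic real is provably $(\FIN\times\FIN)$-positive.
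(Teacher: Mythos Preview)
The paper does not give its own proof of this theorem: it is stated as Brendle's result from unpublished notes, and the only indication of method is the single remark that Brendle ``used a template style iteration along the lines of Shelah~\cite{sh:700}'' to obtain the consistency of $\b=\d<{\a}_{\Pset(\omega\times\omega)\slash(\FIN\times\FIN)}$. So there is no proof in the paper to compare yours against, only that one-line attribution.

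Your sketch is consistent with that attribution: you propose a template iteration of length ${\aleph}_{3}$ with a cofinal ${\omega}_{2}$-chain of Hechler coordinates (to pin $\b=\d={\aleph}_{2}$) and a bookkeeping over ${\aleph}_{2}$-sized $(\FIN\times\FIN)$-almost disjoint families, each killed by a $\sigma$-centered Mathias/Hechler-type step $\Q_{\mathcal A}$ adding a $(\FIN\times\FIN)$-positive set almost disjoint from $\mathcal A$. That is indeed the shape one expects from adapting \cite{sh:700}. You correctly flag the two genuine issues: designing $\Q_{\mathcal A}$ so that the generic is provably $(\FIN\times\FIN)$-positive (a naive finite-stem forcing will not do this), and---the real work---verifying that the $\Q_{\mathcal A}$-steps fit into the template machinery so that the isomorphism-of-names argument still shows no real escapes the ${\omega}_{2}$-scale. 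The phrase ``$\sigma$-centered and, suitably set up, scale-preserving'' is hiding exactly this: $\sigma$-centeredness alone is not what drives preservation in template iterations, and one must check that $\Q_{\mathcal A}$ has the definability and copying properties the template argument needs. As a sketch this is fine and matches what the paper attributes to Brendle, but be aware that the sentence ``reworking it in the present setting\ldots is where essentially all the effort goes'' is carrying the entire proof.
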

As template iterations will not produce models with ${\aleph}_{1} = \b$, Brendle~\cite{Brend} asked whether ${\aleph}_{1} = \b < {\a}_{\Pset(\omega \times \omega) \slash \left( \FIN \times \FIN \right)}$ is consistent.
Section \ref{sec:prod} of this paper will provide a positive answer to Brendle's question: it is consistent that ${\aleph}_{1} = \b$ and that ${\a}_{\Pset(\omega \times \omega) \slash \left( \FIN \times \FIN \right)} = {\aleph}_{2}$.
\section{Products of ideals} \label{sec:prod}
This section begins with some preliminary results about products of ideals in general.
We then focus on products that are supported on the ideal of finite subsets of the index set.
The results about transfinite products of $\FIN$ are then derived as corollaries.
It turns out that the splitting number of the quotients induced by such products is important to understanding the almost disjointness of these quotients.
\begin{defin} \label{def:upsidedown}
 For an indexed family $\seq{A}{x}{\in}{X}$, define
 \begin{align*}
 \displaystyle\coprod_{x \in X}{{A}_{x}} = \displaystyle\bigcup_{x \in X}{\left( \{x\} \times {A}_{x} \right)}.
 \end{align*}
\end{defin}
\begin{defin} \label{def:ax}
 For any sets $A$ and $x$, define $A(x) = \{y: \pr{x}{y}\in A\}$.
\end{defin}
The next proposition summarizes some basic attributes of the Definitions \ref{def:upsidedown} and \ref{def:ax}.
The proofs are straightforward applications of the definitions and are left to the reader.
\begin{propo} \label{prop:ax}
 The following properties hold:
 \begin{enumerate}
  \item
  $A \subseteq B \implies A(x) \subseteq B(x)$;
  \item
  $(A \cup B)(x) = A(x) \cup B(x)$;
  \item
  $(A \cap B)(x) = A(x) \cap B(x)$;
  \item
  $(A\setminus B)(x)=A(x)\setminus B(x)$;
  \item
  suppose $\seq{D}{x}{\in}{X}$ is an  indexed family of sets, $D = \upp{x}{X}{D}$, $A \subseteq X$, $\seq{E}{x}{\in}{A}$ is an  indexed family such that $\forall x \in A\[{E}_{x} \subseteq {D}_{x}\]$, and $E = \upp{x}{A}{E}$; then the following hold:
  \begin{enumerate}
   \item[(a)]
    $E \subseteq D$;
   \item[(b)]
   $\forall x \in A\[ {E}_{x} = E(x) \]$;
   \item[(c)]
   $\forall x \notin A \[E(x) = \emptyset\]$.
  \end{enumerate}
 \end{enumerate}
\end{propo}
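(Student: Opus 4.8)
The plan is to unwind Definitions~\ref{def:upsidedown} and~\ref{def:ax} and chase elements; everything reduces to the single observation that, for a fixed $x$, one has $y \in A(x)$ if and only if $\pr{x}{y} \in A$.

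For items (1)--(4), fix $x$ and $y$. Item (1): if $A \subseteq B$ and $y \in A(x)$, then $\pr{x}{y} \in A \subseteq B$, so $y \in B(x)$. For the three equalities, translate the set operation on the left into the corresponding Boolean combination of the conditions ``$\pr{x}{y} \in A$'' and ``$\pr{x}{y} \in B$'': for instance $y \in (A \setminus B)(x)$ iff $\pr{x}{y} \in A$ and $\pr{x}{y} \notin B$ iff $y \in A(x)$ and $y \notin B(x)$ iff $y \in A(x) \setminus B(x)$, and the cases of union and intersection are identical with conjunction or disjunction in place of the negation.

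For item (5), recall $E = \upp{x}{A}{E} = \bigcup_{x \in A}(\{x\} \times E_x)$ and $D = \upp{x}{X}{D} = \bigcup_{x \in X}(\{x\} \times D_x)$. For (a), the hypothesis $E_x \subseteq D_x$ gives $\{x\} \times E_x \subseteq \{x\} \times D_x$ for each $x \in A$, and since $A \subseteq X$ the union of the former sets over $x \in A$ is contained in the union of the latter over $x \in X$; hence $E \subseteq D$. For (b), fix $x \in A$ and arbitrary $y$: then $y \in E(x)$ iff $\pr{x}{y} \in E$ iff there is some $x' \in A$ with $x = x'$ and $y \in E_{x'}$, and because $x \in A$ this last condition is equivalent to $y \in E_x$; thus $E(x) = E_x$. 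For (c), if $x \notin A$ then no $x' \in A$ equals $x$, so $\pr{x}{y} \notin E$ for every $y$, giving $E(x) = \emptyset$.

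There is no genuine obstacle here. The only points requiring a modicum of care are keeping the two index sets of the coproducts distinct in part (a) (the ambient $X$ for $D$ versus the subset $A$ for $E$), and invoking the assumption $x \in A$ in part (b), without which the displayed equivalence for $E(x)$ would fail.
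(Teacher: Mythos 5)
Your proof is correct and is exactly the straightforward unwinding of Definitions~\ref{def:upsidedown} and~\ref{def:ax} that the paper intends; the paper itself omits the proof, stating only that it is a routine application of the definitions left to the reader. Nothing further is needed.
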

\begin{defin} \label{def:J}
 Let $X$ be an infinite set and let $\seq{D}{x}{\in}{X}$ be an  indexed family of infinite sets.
 Suppose $\seq{\I}{x}{\in}{X}$ is an  indexed family so that ${\I}_{x}$ is an ideal on ${D}_{x}$, for all $x \in X$.
 Let $D = \upp{x}{X}{D}$.
 For any $A \subseteq D$, define
 \begin{align*}
  \spt(A) = \{x \in X: A(x) \notin {\I}_{x}\}.
 \end{align*}
 Given an ideal $\I$ on $X$, define
 \begin{align*}
 \upi{\I}{\I}{x} = \left\{ A \subseteq D: \spt(A) \in \I \right\}.
 \end{align*}
\end{defin}
\begin{lemma} \label{lem:Jisanideal}
 $\J = \upi{\I}{\I}{x}$ is an ideal on the infinite set $D$.
\end{lemma}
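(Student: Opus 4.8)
The plan is to verify directly the four defining conditions of Definition~\ref{def:ideal} for $\J$ on the set $D$. First I would note that $D$ is infinite, since each $D_x$ is infinite and nonempty and hence $D \supseteq \{x\} \times D_x$ is infinite for any fixed $x \in X$.

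The heart of the argument is to understand how the map $A \mapsto \spt(A)$ interacts with the Boolean operations on $\Pset(D)$ and with the ideal structure of the $\I_x$. Using Proposition~\ref{prop:ax} together with the fact that each $\I_x$ is an ideal on $D_x$, I would establish two facts. First, monotonicity: $A \subseteq B \implies \spt(A) \subseteq \spt(B)$, which holds because $A(x) \subseteq B(x)$ and $\I_x$ is downward closed. Second, additivity: $\spt(A \cup B) = \spt(A) \cup \spt(B)$, which holds because $(A\cup B)(x) = A(x) \cup B(x)$ and, since $\I_x$ is closed under finite unions and downward closed, $A(x) \cup B(x) \notin \I_x$ if and only if $A(x) \notin \I_x$ or $B(x) \notin \I_x$.

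With these two facts in hand, each ideal axiom for $\J$ reduces to the corresponding axiom for $\I$. For condition~(1): if $Y \subseteq D$ is finite, then every section $Y(x)$ is finite and hence lies in $\I_x$, so $\spt(Y) = \emptyset \in \I$ and $Y \in \J$. For condition~(2): if $Y \in \J$ and $Z \subseteq Y$, then $\spt(Z) \subseteq \spt(Y) \in \I$ by monotonicity, and $\spt(Z) \in \I$ since $\I$ is downward closed, so $Z \in \J$. For condition~(3): if $Y, Z \in \J$, then $\spt(Y \cup Z) = \spt(Y) \cup \spt(Z) \in \I$ by additivity and closure of $\I$ under finite unions, so $Y \cup Z \in \J$.

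The one step that calls for a little care is condition~(4), that $D \notin \J$. Here I would use Proposition~\ref{prop:ax}(5) to identify $D(x) = D_x$ for every $x \in X$, and then, since each $\I_x$ is a proper ideal on $D_x$, conclude $D_x \notin \I_x$ and hence $\spt(D) = X$. As $\I$ is proper, $X \notin \I$, so $\spt(D) = X \notin \I$ and therefore $D \notin \J$. I do not anticipate any genuine obstacle: the whole proof is a routine transfer of the ideal axioms from $\I$ and the $\I_x$ to $\J$ via the support map, and the only piece of real content is the identification $D(x) = D_x$ used to establish properness.
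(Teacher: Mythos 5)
Your proposal is correct and follows essentially the same route as the paper: verify the four ideal axioms for $\J$ by transferring them through the support map, using monotonicity of $\spt$, the inclusion $\spt(A\cup B)\subseteq\spt(A)\cup\spt(B)$ (you prove the full equality, which is fine but not needed), and the identification $D(x)=D_x$ to get properness. No gaps.
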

\begin{proof}
 As $X$ is infinite and as each ${D}_{x}$ is infinite, it is clear that $D$ is infinite.
 By definition $\J \subseteq \Pset(D)$.
 Suppose $A$ and $B$ satisfy $B \subseteq A$ and $A \in \J$.
 So $A \subseteq D$ and $\spt(A) \in \I$.
 As $B \subseteq A$, it is easy to see that $\spt(B) \subseteq \spt(A)$, whence $\spt(B) \in \I$.
 Therefore, $B \in \J$, as required for condition (2) of Definition \ref{def:ideal}.
 
 For condition (3) of Definition \ref{def:ideal}, fix $A, B \in \J$.
 Thus $A, B \subseteq D$ and $\spt(A) \cup \spt(B) \in \I$.
 Consider any $x \in \spt(A \cup B)$.
 Then $x \in X$ and, since ${\I}_{x}$ is an ideal on ${D}_{x}$, either $A(x) \notin {\I}_{x}$ or $B(x) \notin {\I}_{x}$.
 It follows that $x \in \spt(A) \cup \spt(B)$.
 So $\spt(A \cup B) \subseteq \spt(A) \cup \spt(B)$, which implies that $\spt(A \cup B) \in \I$.
 Therefore, $A\cup B \in \J$, as wanted.
 
 Next, note that by Proposition \ref{prop:ax}, $\forall x \in X \[{D}_{x} = D(x)\]$.
 Hence for all $x \in X$, ${\I}_{x}$ is a proper ideal on $D(x)$, and so $D(x) \notin {\I}_{x}$.
 Hence $\spt(D) = X\notin \I$.
 Therefore, $D\notin\J$.
 
 Finally suppose $E \subseteq D$ is finite.
 Then $\spt(E) = \emptyset$.
 To see this suppose otherwise that $x \in \spt(E)$.
 Then $x\in X$ and $E(x) \notin {\I}_{x}$.
 However, $E(x)$ is finite and $E(x) \subseteq D(x) = {D}_{x}$, which implies $E(x) \in {\I}_{x}$ because ${\I}_{x}$ is an ideal on ${D}_{x}$.
 This contradiction shows that $\spt(E) = \emptyset \in \I$.
 Therefore, $E \in \J$, completing the proof.
\end{proof}
Isomorphism between ideals is considered next.
This becomes necessary in order to show that the sequence of cardinal invariants to be defined does not depend on an arbitrary choice of bijections.
\begin{defin}\label{def:}
 Suppose $X$ and $Y$ are infinite sets, and $\I$ and $\J$ are ideals on $X$ and $Y$ respectively.
 We say \emph{$\I$ is isomorphic to $\J$} if there is a bijection $f: X \rightarrow Y$ such that
 \begin{align*}
  \J = \left\{ f'' A: A \in \I \right\}.
 \end{align*}
 $f$ is called \emph{an isomorphism from $\I$ to $\J$}.
\end{defin}
The following proposition lists two important properties of isomorphisms which will be useful in the proof of Lemma \ref{lem:iso}.
Their proofs are easy and left to the reader.
In (1) of Proposition \ref{prop:iso} and in the rest of the paper, ${\FIN}_{X}$ is the ideal of finite subsets of $X$.
\begin{propo}\label{prop:iso}
 Let $X, Y$ be infinite sets. Then
 \begin{enumerate}
  \item
   if $f: X \rightarrow Y$ is a bijection, then ${\FIN}_{Y} = \left\{f''A: A \in {\FIN}_{X} \right\}$;
  \item
   if $\I$ is an ideal on $X$, $\J$ is an ideal on $Y$, and $g: X \rightarrow Y$ is an isomorphism from $\I$ to $\J$, then for any $A \subseteq X$, if $A \notin \I$, then $g'' A \notin \J$.
 \end{enumerate}
\end{propo}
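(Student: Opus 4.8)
The plan is to handle the two assertions separately, using only the definition of a bijection together with elementary facts about direct images; no machinery beyond Definition~\ref{def:} is needed.

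For (1), I would prove the two inclusions. For $\left\{ f''A : A \in {\FIN}_{X} \right\} \subseteq {\FIN}_{Y}$, note that if $A \subseteq X$ is finite then $f''A \subseteq Y$ and $\lc f''A \rc \leq \lc A \rc < \aleph_0$, so $f''A \in {\FIN}_{Y}$. For the reverse inclusion, given a finite $B \subseteq Y$, put $A = \left(f^{-1}\right)''B$; since $f$ is injective, $\lc A \rc = \lc B \rc$ is finite, so $A \in {\FIN}_{X}$, and since $f$ is surjective, $f''A = B$. Hence every finite subset of $Y$ is realised as $f''A$ for some $A \in {\FIN}_{X}$, giving equality.

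For (2), I would argue by contradiction. Suppose $A \subseteq X$ with $A \notin \I$ but $g''A \in \J$. By the definition of an isomorphism from $\I$ to $\J$ we have $\J = \left\{ g''A' : A' \in \I \right\}$, so there is $A' \in \I$ with $g''A = g''A'$. Because $g$ is a bijection, applying $g^{-1}$ to both sides (equivalently, invoking injectivity of $g$) yields $A = \left(g^{-1}\right)''\left(g''A\right) = \left(g^{-1}\right)''\left(g''A'\right) = A'$, so $A = A' \in \I$, contradicting the choice of $A$. Therefore $g''A \notin \J$.

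Neither step presents a genuine obstacle; the only point that deserves a moment's care is tracking where injectivity and surjectivity of the relevant bijection are used — injectivity to preserve finiteness in (1) and to cancel $g$ in (2), and surjectivity to exhibit an arbitrary finite subset of $Y$ as a direct image in (1).
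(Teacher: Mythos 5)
Your proof is correct and is exactly the elementary argument the paper has in mind; the authors explicitly leave this proposition to the reader as "easy," and your two-inclusion argument for (1) and the injectivity-cancellation argument for (2) fill that gap in the standard way. No issues.
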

Proposition \ref{prop:iso} is used to prove the following lemma.
Its proof is a straightforward unraveling of the definitions.
Details are left to the reader.
\begin{lemma} \label{lem:iso}
 Let $X, Y$ be infinite sets and $\seq{D}{x}{\in}{X}$ and $\seq{E}{y}{\in}{Y}$ be indexed families of infinite sets. 
 Let $f: X \rightarrow Y$ be any bijection.
 Suppose $\seq{\I}{x}{\in}{X}$, $\seq{\J}{y}{\in}{Y}$, and $\seq{g}{x}{\in}{X}$ are indexed families so that ${\I}_{x}$ is an ideal on ${D}_{x}$, ${\J}_{y}$ is an ideal on ${E}_{y}$, and ${g}_{x}: {D}_{x} \rightarrow {E}_{f(x)}$ is an isomorphism from ${\I}_{x}$ to ${\J}_{f(x)}$, for all $x \in X$.
 Then if $\I$ is an ideal on $X$, $\J$ is an ideal on $Y$, and $f$ is an isomorphism from $\I$ to $\J$, then
 \begin{align*}
  \upi{\I}{\I}{x} \ \text{is isomorphic to} \ \upi{\J}{\J}{y}.
 \end{align*}
\end{lemma}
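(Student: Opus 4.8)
The plan is to exhibit an explicit bijection of the underlying sets that witnesses the isomorphism. Write $D = \upp{x}{X}{D}$ and $E = \upp{y}{Y}{E}$, and abbreviate $\I' = \upi{\I}{\I}{x}$ and $\J' = \upi{\J}{\J}{y}$; by Lemma~\ref{lem:Jisanideal} these are ideals on $D$ and on $E$. Define $h \colon D \to E$ by setting $h(\pr{x}{a}) = \pr{f(x)}{{g}_{x}(a)}$ whenever $x \in X$ and $a \in {D}_{x}$. The claim to verify is that $h$ is a bijection and that $\J' = \{h''A : A \in \I'\}$, which is precisely the assertion that $h$ is an isomorphism from $\I'$ onto $\J'$.

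That $h$ is a bijection is routine. The sets $\{x\}\times{D}_{x}$ ($x\in X$) partition $D$, the sets $\{y\}\times{E}_{y}$ ($y\in Y$) partition $E$, the map $f$ is a bijection of $X$ onto $Y$, and for each $x$ the map ${g}_{x}$ is a bijection of ${D}_{x}$ onto ${E}_{f(x)}$; hence $h$ carries the block $\{x\}\times{D}_{x}$ bijectively onto the block $\{f(x)\}\times{E}_{f(x)}$, and since $f$ is onto, every block of $E$ is hit exactly once as $x$ ranges over $X$.

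The substantive step is to see how $h$ moves supports. Unwinding Definition~\ref{def:ax} (with the help of Proposition~\ref{prop:ax}), for every $x\in X$ and every $A\subseteq D$ one computes $(h''A)(f(x)) = {g}_{x}''(A(x))$, and since $f$ is surjective this accounts for $(h''A)(y)$ for all $y\in Y$. Because ${g}_{x}$ is an isomorphism from ${\I}_{x}$ to ${\J}_{f(x)}$, one has the equivalence $A(x)\in{\I}_{x} \iff {g}_{x}''(A(x))\in{\J}_{f(x)}$: the forward implication is immediate from the definition of isomorphism, and the reverse is the contrapositive of clause~(2) of Proposition~\ref{prop:iso}. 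Consequently $\spt(h''A) = f''\spt(A)$, where the left-hand $\spt$ is computed with respect to $\seq{\J}{y}{\in}{Y}$ and the right-hand one with respect to $\seq{\I}{x}{\in}{X}$. The same argument, now applied to the isomorphism $f$ from $\I$ to $\J$, gives $f''\spt(A)\in\J \iff \spt(A)\in\I$. Chaining these, for every $A\subseteq D$ we obtain $h''A\in\J' \iff \spt(h''A)\in\J \iff \spt(A)\in\I \iff A\in\I'$.

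It remains to read off the conclusion. The equivalence just established gives $\{h''A : A\in\I'\}\subseteq\J'$ at once; conversely, for any $B\in\J'$, putting $A = (h^{-1})''B$ yields $h''A = B$ with $A\in\I'$ by the equivalence applied to $A$, so $\J' = \{h''A : A\in\I'\}$ and $h$ is an isomorphism from $\I'$ to $\J'$, as desired. The only places demanding a little care are the evaluation of the sections $(h''A)(y)$ via Proposition~\ref{prop:ax} and the passage from the one-directional Proposition~\ref{prop:iso}(2) to the biconditionals used above; neither is genuinely hard, which is why the details are left to the reader in the main text.
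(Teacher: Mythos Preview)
Your proof is correct and follows essentially the same approach as the paper's (commented-out) proof: define the natural bijection $h(\pr{x}{a}) = \pr{f(x)}{{g}_{x}(a)}$, verify $(h''A)(f(x)) = {g}_{x}''(A(x))$, deduce $\spt(h''A) = f''\spt(A)$, and conclude via the isomorphism $f$. The only cosmetic difference is that you package the argument as a chain of biconditionals, whereas the paper verifies the two containments separately.
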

We define an ${\omega}_{1}$-sequence of countable sets and ideals on them by starting with ${\FIN}_{\omega}$ and iterating the operation of taking products that are supported on ${\FIN}_{\omega}$ through all the countable ordinals.
These ideals have been studied at least since the 1960s in the context of convergence of functions, for example see \cite{GG} and \cite{Kat}.
\begin{defin}\label{def:Dfinalpha}
 $\Lim({\omega}_{1}) = \left\{ \alpha < {\omega}_{1}: \alpha \ \text{is a limit ordinal} \right\}$.
 Let $\bar{s} = \seq{s}{\alpha}{\in}{\Lim({\omega}_{1})}$ be an  indexed family so that $\forall \alpha \in \Lim({\omega}_{1})\[{s}_{\alpha}: \omega \rightarrow \alpha \ \text{is a bijection}\]$.
 By induction on $\alpha < {\omega}_{1}$, define an infinite set $D(\bar{s}, \alpha)$ and an ideal $\FIN(\bar{s}, \alpha)$ on $D(\bar{s}, \alpha)$ as follows:
 \begin{enumerate}
  \item
  if $\alpha=0$, then $D(\bar{s}, \alpha)=\omega$ and $\FIN(\bar{s}, \alpha)={\FIN}_{\omega}$;
  \item
  when $\alpha=\xi+1$, and an infinite set $D(\bar{s}, \xi)$ as well as an ideal $\FIN(\bar{s}, \xi)$ on $D(\bar{s}, \xi)$ are given, then $D(\bar{s}, \alpha) = \displaystyle\coprod_{n\in\omega}{D(\bar{s}, \xi)}$ and $\FIN(\bar{s}, \alpha) = \displaystyle\coprod_{{\FIN}_{\omega}}{\FIN(\bar{s}, \xi)}$;
  by Lemma \ref{lem:Jisanideal}, $D(\bar{s}, \alpha)$ is an infinite set and $\FIN(\bar{s}, \alpha)$ is an ideal on $D(\bar{s}, \alpha)$; 
  \item
  when $\alpha$ is a limit ordinal, and for each $n \in \omega$, an infinite set $D(\bar{s}, {s}_{\alpha}(n))$ as well as an ideal $\FIN(\bar{s}, {s}_{\alpha}(n))$ on $D(\bar{s}, {s}_{\alpha}(n))$ are given, then $D(\bar{s}, \alpha) = \displaystyle\coprod_{n\in\omega}{D(\bar{s}, {s}_{\alpha}(n))}$ and $\FIN(\bar{s}, \alpha) = \displaystyle\coprod_{{\FIN}_{\omega}}{\FIN(\bar{s}, {s}_{\alpha}(n))}$;
  once again by Proposition \ref{lem:Jisanideal}, $D(\bar{s}, \alpha)$ is an infinite set and $\FIN(\bar{s}, \alpha)$ is an ideal on $D(\bar{s}, \alpha)$.
 \end{enumerate}
\end{defin}
\begin{lemma}\label{lem:noalpha}
 Let $\bar{s} = \seq{s}{\alpha}{\in}{\Lim({\omega}_{1})}$ and $\bar{t} = \seq{t}{\alpha}{\in}{\Lim({\omega}_{1})}$ be indexed families such that
 \begin{align*}
  \forall \alpha \in \Lim({\omega}_{1})\[{s}_{\alpha}: \omega \rightarrow \alpha \ \text{and} \ {t}_{\alpha}: \omega \rightarrow \alpha \ \text{are bijections}\].
 \end{align*}
 Then for each $\alpha < {\omega}_{1}$, $\FIN(\bar{s}, \alpha)$ is isomorphic to $\FIN(\bar{t}, \alpha)$.
\end{lemma}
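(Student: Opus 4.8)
The plan is a transfinite induction on $\alpha < {\omega}_{1}$, with Lemma \ref{lem:iso} supplying the inductive step in both the successor and the limit cases. For $\alpha = 0$ there is nothing to do, since $\FIN(\bar{s}, 0) = {\FIN}_{\omega} = \FIN(\bar{t}, 0)$, as witnessed by the identity map.

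For the successor step, suppose $\alpha = \xi + 1$ and fix, by the induction hypothesis, an isomorphism $g : D(\bar{s}, \xi) \to D(\bar{t}, \xi)$ from $\FIN(\bar{s}, \xi)$ to $\FIN(\bar{t}, \xi)$. By Definition \ref{def:Dfinalpha}(2), $\FIN(\bar{s}, \alpha) = \displaystyle\coprod_{{\FIN}_{\omega}}{\FIN(\bar{s}, \xi)}$ and $\FIN(\bar{t}, \alpha) = \displaystyle\coprod_{{\FIN}_{\omega}}{\FIN(\bar{t}, \xi)}$, both formed over the index set $\omega$ carrying the ideal ${\FIN}_{\omega}$, with every fibre equal to $D(\bar{s}, \xi)$, respectively $D(\bar{t}, \xi)$. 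I would then invoke Lemma \ref{lem:iso} with $X = Y = \omega$, with $f = \mathrm{id}_{\omega}$ (an isomorphism from ${\FIN}_{\omega}$ to ${\FIN}_{\omega}$ by Proposition \ref{prop:iso}(1)), with ${\I}_{n} = \FIN(\bar{s}, \xi)$, ${\J}_{n} = \FIN(\bar{t}, \xi)$, and ${g}_{n} = g$ for all $n \in \omega$; its conclusion is precisely that $\FIN(\bar{s}, \alpha)$ is isomorphic to $\FIN(\bar{t}, \alpha)$.

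The limit step is the one requiring attention. Let $\alpha$ be a limit ordinal and assume $\FIN(\bar{s}, \beta)$ is isomorphic to $\FIN(\bar{t}, \beta)$ for every $\beta < \alpha$. Since ${s}_{\alpha}$ and ${t}_{\alpha}$ are bijections of $\omega$ onto $\alpha$, the map $f = {t}_{\alpha}^{-1} \circ {s}_{\alpha} : \omega \to \omega$ is a bijection, hence again an isomorphism from ${\FIN}_{\omega}$ to ${\FIN}_{\omega}$; it satisfies ${t}_{\alpha}(f(n)) = {s}_{\alpha}(n)$ for every $n$. For each $n \in \omega$ we have ${s}_{\alpha}(n) < \alpha$, so the induction hypothesis yields an isomorphism ${g}_{n} : D(\bar{s}, {s}_{\alpha}(n)) \to D(\bar{t}, {s}_{\alpha}(n)) = D(\bar{t}, {t}_{\alpha}(f(n)))$ from $\FIN(\bar{s}, {s}_{\alpha}(n))$ to $\FIN(\bar{t}, {t}_{\alpha}(f(n)))$; choosing one such ${g}_{n}$ for each $n$ involves only countably many selections. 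By Definition \ref{def:Dfinalpha}(3), $\FIN(\bar{s}, \alpha) = \displaystyle\coprod_{{\FIN}_{\omega}}{\FIN(\bar{s}, {s}_{\alpha}(n))}$ with $n$-th fibre $D(\bar{s}, {s}_{\alpha}(n))$, while $\FIN(\bar{t}, \alpha) = \displaystyle\coprod_{{\FIN}_{\omega}}{\FIN(\bar{t}, {t}_{\alpha}(m))}$ with $m$-th fibre $D(\bar{t}, {t}_{\alpha}(m))$. Applying Lemma \ref{lem:iso} with $X = Y = \omega$, with this $f$, with $\I = \J = {\FIN}_{\omega}$, with ${\I}_{n} = \FIN(\bar{s}, {s}_{\alpha}(n))$ and ${\J}_{m} = \FIN(\bar{t}, {t}_{\alpha}(m))$, and with the ${g}_{n}$ just chosen, we obtain that $\FIN(\bar{s}, \alpha)$ is isomorphic to $\FIN(\bar{t}, \alpha)$, which closes the induction.

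The only genuine subtlety — and the step I would double-check most carefully — is the choice of the index bijection $f$ in the limit case: it must be the one that sends the $n$-th fibre on the $\bar{s}$-side, $D(\bar{s}, {s}_{\alpha}(n))$, to the $f(n)$-th fibre on the $\bar{t}$-side, and this forces $f = {t}_{\alpha}^{-1} \circ {s}_{\alpha}$; once this identification is pinned down, the hypotheses of Lemma \ref{lem:iso} hold verbatim and everything else is bookkeeping. One should also keep in mind that the recursion of Definition \ref{def:Dfinalpha} makes $D(\bar{s}, \beta)$ and $\FIN(\bar{s}, \beta)$ well-defined for every $\beta \le \alpha$, so the induction hypothesis is genuinely available at each ${s}_{\alpha}(n)$.
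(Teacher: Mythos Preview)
Your proof is correct and follows essentially the same route as the paper's: transfinite induction with the identity bijection in the successor case and $f = {t}_{\alpha}^{-1} \circ {s}_{\alpha}$ in the limit case, appealing to Lemma~\ref{lem:iso} in both. The extra remarks you add about the choice of $f$ and the countably many selections of the ${g}_{n}$ are accurate but not needed for the argument.
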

\begin{proof}
 Proceed by induction on $\alpha < {\omega}_{1}$.
 If $\alpha=0$, then by definition, $D(\bar{s}, \alpha) = \omega=D(\bar{t}, \alpha)$ and $\FIN(\bar{s}, \alpha) = {\FIN}_{\omega} = \FIN(\bar{t}, \alpha)$.
 Hence the identity map on $\omega$ is an isomorphism.
 
 Suppose $\alpha=\xi+1$.
 By the induction hypothesis, there exists $g: D(\bar{s}, \xi) \rightarrow D(\bar{t}, \xi)$ which is an isomorphism from $\FIN(\bar{s}, \xi)$ to $\FIN(\bar{t}, \xi)$.
 Lemma \ref{lem:iso} is applicable with $\omega$ as $X$ and $Y$, $\left\langle D(\bar{s}, \xi): n \in \omega \right\rangle$ as $\seq{D}{x}{\in}{X}$, $\left\langle D(\bar{t}, \xi): n \in \omega \right\rangle$ as $\seq{E}{y}{\in}{Y}$, the identity map on $\omega$ as $f$, $\left\langle \FIN(\bar{s}, \xi): n \in \omega \right\rangle$ as $\seq{\I}{x}{\in}{X}$, $\left\langle \FIN(\bar{t}, \xi): n \in \omega \right\rangle$ as $\seq{\J}{y}{\in}{Y}$, $\left\langle g: n \in \omega \right\rangle$ as $\seq{g}{x}{\in}{X}$, and ${\FIN}_{\omega}$ as $\I$ and $\J$.
 And it yields the conclusion that $\FIN(\bar{s}, \xi+1)$ is isomorphic to $\FIN(\bar{t}, \xi+1)$.
 
 Now assume that $\alpha$ is a limit ordinal.
 $f: \omega \rightarrow \omega$ defined by $f(n) = {t}^{-1}_{\alpha}\left({s}_{\alpha}(n)\right)$ is a bijection.
 By item (1) of Proposition \ref{prop:iso}, $f$ is an isomorphism from ${\FIN}_{\omega}$ to ${\FIN}_{\omega}$.
 For any $n \in \omega$, by the induction hypothesis, there is ${g}_{n}: D(\bar{s}, {s}_{\alpha}(n)) \rightarrow D(\bar{t}, {s}_{\alpha}(n))$ which is an isomorphism from $\FIN(\bar{s}, {s}_{\alpha}(n))$ to $\FIN(\bar{t}, {s}_{\alpha}(n))$.
 Hence Lemma \ref{lem:iso} is applicable with $\omega$ as $X$ and $Y$, $\left\langle D(\bar{s}, {s}_{\alpha}(n)): n \in \omega \right\rangle$ as $\seq{D}{x}{\in}{X}$, $\left\langle D(\bar{t}, {t}_{\alpha}(n)): n \in \omega \right\rangle$ as $\seq{E}{y}{\in}{Y}$, $f$ as $f$, $\left\langle \FIN(\bar{s}, {s}_{\alpha}(n)): n \in \omega \right\rangle$ as $\seq{\I}{x}{\in}{X}$, $\left\langle \FIN(\bar{t}, {t}_{\alpha}(n)): n \in \omega \right\rangle$ as $\seq{\J}{y}{\in}{Y}$, $\seq{g}{n}{\in}{\omega}$ as $\seq{g}{x}{\in}{X}$, and ${\FIN}_{\omega}$ as $\I$ and $\J$.
 And it yields the conclusion that $\FIN(\bar{s}, \alpha)$ is isomorphic to $\FIN(\bar{t}, \alpha)$.
\end{proof}
Therefore the choice of $\bar{s}$ is inconsequential to the properties of the quotients $\Pset(D(\bar{s}, \alpha)) \slash \FIN(\bar{s}, \alpha)$.
The splitting numbers of these quotients will now be considered.
The first observation is that the splitting numbers are well-defined.
\begin{lemma}\label{lem:notmaximal1}
 Suppose $W$ is a countably infinite set.
 Let $\seq{D}{w}{\in}{W}$ be an  indexed family of infinite sets and let $\seq{\I}{w}{\in}{W}$ be an  indexed family such that for each $w \in W$, ${\I}_{w}$ is an ideal on ${D}_{w}$.
 Suppose $E=\upp{w}{W}{D}$ and $\J=\upi{{\FIN}_{W}}{\I}{w}$.
 Then for any $A \subseteq E$ with $A \notin \J$, $\J\restrict A$ is not maximal on $A$.
\end{lemma}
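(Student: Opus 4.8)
The plan is to prove the lemma directly by exhibiting two disjoint $\J$-positive subsets of $A$; since (as recorded immediately after Definition~\ref{def:ideal}) an ideal on a set $Y$ fails to be maximal as soon as $Y$ has two disjoint subsets lying outside the ideal, this suffices — and $\J \restrict A$ is a proper ideal on $A$ precisely because $A \notin \J$, so the statement is not vacuous. The first move is to unwind the hypothesis: since $\J = \upi{{\FIN}_{W}}{\I}{w} = \left\{ B \subseteq E : \spt(B) \in {\FIN}_{W} \right\}$, the assumption $A \notin \J$ says exactly that $\spt(A) = \{ w \in W : A(w) \notin {\I}_{w} \}$ is infinite; as $W$ is countable, $\spt(A)$ is countably infinite, so I may fix a partition $\spt(A) = {S}_{0} \cup {S}_{1}$ into two disjoint infinite sets, and set ${A}_{i} = A \cap \upp{w}{{S}_{i}}{D} = \{ \pr{w}{y} \in A : w \in {S}_{i} \}$ for $i \in \{0,1\}$. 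Because ${S}_{0} \cap {S}_{1} = \emptyset$, the sets ${A}_{0}$ and ${A}_{1}$ are disjoint, and both are contained in $A$.

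The only step that takes any care is computing $\spt({A}_{i})$. Directly from Definition~\ref{def:ax} (or via Proposition~\ref{prop:ax}), one sees that ${A}_{i}(w) = A(w)$ when $w \in {S}_{i}$, while ${A}_{i}(w) = \emptyset \in {\I}_{w}$ when $w \notin {S}_{i}$; hence $\spt({A}_{i}) = {S}_{i} \cap \spt(A) = {S}_{i}$, which is infinite and so not in ${\FIN}_{W}$. Therefore ${A}_{i} \notin \J$, and since ${A}_{i} \subseteq A$ this gives ${A}_{i} \notin \J \restrict A$. Thus ${A}_{0}$ and ${A}_{1}$ are disjoint subsets of $A$, each lying outside $\J \restrict A$ (equivalently, ${A}_{0} \notin \J \restrict A$ and $A \setminus {A}_{0} \supseteq {A}_{1} \notin \J \restrict A$), and so $\J \restrict A$ is not a maximal ideal on $A$.

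I do not anticipate a genuine obstacle: the proof is a routine unravelling of the definition of the product $\coprod_{{\FIN}_{W}}$, and its entire content is that an infinite subset of the countable set $W$ splits into two infinite pieces. It is worth pointing out that the hypotheses require nothing of the ideals ${\I}_{w}$ beyond their being ideals on the ${D}_{w}$ — in particular, condition~(5) of Definition~\ref{def:ideal} is not imposed on them — because the splitting is carried out entirely at the level of the index set $W$. This is precisely what will later permit the lemma to be applied uniformly to every ideal $\FIN(\bar{s}, \alpha)$, which will in turn secure condition~(5), and hence non-atomicity of the relevant quotients, for all the transfinite products of $\FIN$.
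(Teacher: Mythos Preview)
Your proof is correct and follows essentially the same route as the paper's: split the infinite set $\spt(A)$ into two disjoint infinite pieces and lift them to two disjoint $\J$-positive subsets of $A$. The only cosmetic differences are that the paper writes the pieces as $\displaystyle\coprod_{w\in X}A(w)$ rather than $A\cap\displaystyle\coprod_{w\in {S}_{0}}{D}_{w}$ (the same set) and spells out the extension of $\J\restrict A$ to a strictly larger ideal, whereas you invoke the remark after Definition~\ref{def:ideal} directly.
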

\begin{proof}
 As $A\notin \J$, $\spt(A) \subseteq W$ is infinite.
 Find disjoint infinite sets $X, Y$ with $X \cup Y = \spt(A)$.
 Let $B= \displaystyle\coprod_{w\in X}{A(w)}$ and $C=\displaystyle\coprod_{w\in Y}{A(w)}$.
 Then for any $w\in X$, $B(w)=A(w) \notin {\I}_{w}$ and for any $w \in Y$, $C(w)=A(w)\notin{\I}_{w}$.
 It is thus clear that  $X \subseteq \spt(B)$ and $Y \subseteq \spt(C)$.
 Therefore, $B, C \notin \J$, whence $B, C \notin \J\restrict A$.
 Furthermore, it is simple to check that $B \cap C = \emptyset$.
 Thus $C \subseteq A \setminus B$, and as $\J \restrict A$ is an ideal on $A$, it follows that $A\setminus B \notin \J\restrict A$.
 Since $B\subseteq A$, $\J \restrict A$ is an ideal on $A$, and $A \setminus B \notin \J\restrict A$, there is an ideal $\KK$ on $A$ such that $\left( \J\restrict A \right) \cup \{B\} \subseteq \KK$.
 $\J\restrict A \subsetneq \KK$ because $B \in \KK \setminus \left( \J \restrict A \right)$, showing that $\J\restrict A$ is not a maximal ideal on $A$.
\end{proof}
\begin{corol}\label{cor:salpha}
  Let $\bar{s} = \seq{s}{\alpha}{\in}{\Lim({\omega}_{1})}$ be so that $\forall \alpha \in \Lim({\omega}_{1})\[{s}_{\alpha}: \omega \rightarrow \alpha \ \text{is a bijection}\]$.
  The following hold:
  \begin{enumerate}
   \item
    for each $\alpha < {\omega}_{1}$ and for each $A \subseteq D(\bar{s}, \alpha)$, if $A \notin \FIN(\bar{s}, \alpha)$, then $\FIN(\bar{s}, \alpha)\restrict A$ is not maximal on $A$;
   \item
    for each $\alpha < {\omega}_{1}$ and for each $A \subseteq D(\bar{s}, \alpha)$, if $A \notin \FIN(\bar{s}, \alpha)$, then ${\s}_{\Pset(A) \slash \left( \FIN(\bar{s}, \alpha)\restrict A \right)}$ is well-defined.
  \end{enumerate}
\end{corol}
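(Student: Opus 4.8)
The plan is to obtain both clauses as essentially immediate consequences of Lemma~\ref{lem:notmaximal1}, together with Propositions~\ref{prop:nonatomicsplit} and~\ref{prop:nonmaximalnonatomic}; no induction on $\alpha$ will be needed, only a short split into cases. For clause~(1), I would fix $\alpha < {\omega}_{1}$ and $A \subseteq D(\bar s,\alpha)$ with $A \notin \FIN(\bar s,\alpha)$ and argue by cases on $\alpha$. When $\alpha = 0$, the ideal $\FIN(\bar s,0)\restrict A$ is simply the ideal of finite subsets of the infinite set $A$, and splitting $A$ into two infinite halves produces disjoint $\FIN(\bar s,0)\restrict A$-positive sets, so this restriction is not maximal on $A$. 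When $\alpha$ is a successor or a limit ordinal, Definition~\ref{def:Dfinalpha} presents $D(\bar s,\alpha)$ as $\coprod_{n\in\omega}D_n$ and $\FIN(\bar s,\alpha)$ as $\coprod_{\FIN_\omega}\I_n$, where the $D_n$ are previously constructed infinite sets (namely $D(\bar s,\xi)$ if $\alpha = \xi+1$, and $D(\bar s,s_\alpha(n))$ if $\alpha$ is a limit) and the $\I_n$ are the corresponding ideals. So Lemma~\ref{lem:notmaximal1}, applied with $W = \omega$, immediately yields that $\FIN(\bar s,\alpha)\restrict A$ is not maximal on $A$.

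For clause~(2), I would invoke the remark following Proposition~\ref{prop:nonatomicsplit}: ${\s}_{\BB}$ is well-defined exactly when $\BB$ is non-atomic. So it suffices to show $\Pset(A)\slash\bigl(\FIN(\bar s,\alpha)\restrict A\bigr)$ is non-atomic for $A \notin \FIN(\bar s,\alpha)$. Writing $\J = \FIN(\bar s,\alpha)\restrict A$, which is an ideal on the infinite set $A$ (recall $A$ is infinite, being positive), the plan is to check that $\J$ itself satisfies condition~(5) of Definition~\ref{def:ideal} and then quote Proposition~\ref{prop:nonmaximalnonatomic}. To verify condition~(5), take $Y \subseteq A$ with $Y \notin \J$; since $Y \subseteq A$ this means $Y \notin \FIN(\bar s,\alpha)$, and the trivial identity $\J\restrict Y = \bigl(\FIN(\bar s,\alpha)\restrict A\bigr)\restrict Y = \FIN(\bar s,\alpha)\restrict Y$, together with clause~(1) applied with $Y$ in place of $A$, shows that $\J\restrict Y$ is not maximal on $Y$. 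Hence Proposition~\ref{prop:nonmaximalnonatomic} gives non-atomicity of $\Pset(A)\slash\J$, and therefore ${\s}_{\Pset(A)\slash\J}$ is well-defined.

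There is no substantial obstacle here; the only points requiring a little care are treating $\alpha = 0$ by hand (the product description of Definition~\ref{def:Dfinalpha} degenerates there, so Lemma~\ref{lem:notmaximal1} does not apply directly), and recording the identity $\bigl(\I\restrict A\bigr)\restrict Y = \I\restrict Y$ for $Y\subseteq A$, which is precisely what allows clause~(1) to be fed into the verification of condition~(5) needed for clause~(2). In particular, the structure of $D(\bar s,\alpha)$ as a $\FIN_\omega$-supported product for $\alpha>0$ is already supplied by Definition~\ref{def:Dfinalpha} via Lemma~\ref{lem:Jisanideal}, so nothing beyond a case analysis is required.
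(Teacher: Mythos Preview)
Your proposal is correct and matches the paper's proof essentially step for step: the paper also handles $\alpha=0$ by hand, invokes Lemma~\ref{lem:notmaximal1} for $\alpha>0$, and derives clause~(2) from clause~(1) via the identity $\bigl(\FIN(\bar s,\alpha)\restrict A\bigr)\restrict B = \FIN(\bar s,\alpha)\restrict B$ together with Propositions~\ref{prop:nonatomicsplit} and~\ref{prop:nonmaximalnonatomic}. The paper phrases clause~(1) as an ``induction on $\alpha$'', but the inductive hypothesis is never invoked, so your observation that only a case split is needed is accurate.
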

\begin{proof}
 It has already been established that each $D(\bar{s}, \alpha)$ is an infinite set and that $\FIN(\bar{s}, \alpha)$ is an ideal on $D(\bar{s}, \alpha)$.
 Item (1) is proved by induction on $\alpha$.
 If $\alpha = 0$, then $D(\bar{s}, \alpha) = \omega$ and $\FIN(\bar{s}, \alpha) = {\FIN}_{\omega}$.
 So if $A \subseteq D(\bar{s}, \alpha)$ and if $A \notin \FIN(\bar{s}, \alpha)$, then $A$ is an infinite subset of $\omega$ and ${\FIN}_{\omega} \restrict A = {\FIN}_{A}$, which is not maximal on $A$.
 If $\alpha > 0$, then $D(\bar{s}, \alpha)$ has the form $\upp{n}{\omega}{D}$ and $\FIN(\bar{s}, \alpha)$ has the form $\upi{{\FIN}_{\omega}}{\I}{n}$, where each ${D}_{n}$ is an infinite set and ${\I}_{n}$ is an ideal on ${D}_{n}$.
 Thus \ref{lem:notmaximal1} yields the conclusion of item (1).
 
 To prove item (2), fix some $\alpha < {\omega}_{1}$ and $A \subseteq D(\bar{s}, \alpha)$ with $A \notin \FIN(\bar{s}, \alpha)$.
 Then $\FIN(\bar{s}, \alpha) \restrict A$ is an ideal on $A$, and according to Propositions \ref{prop:nonatomicsplit} and \ref{prop:nonmaximalnonatomic}, it needs to be seen that for every $B \subseteq A$ with $B \notin \FIN(\bar{s}, \alpha) \restrict A$, $\left(\FIN(\bar{s}, \alpha) \restrict A \right) \restrict B$ is not maximal on $B$.
 This is however clear because $\left(\FIN(\bar{s}, \alpha) \restrict A \right) \restrict B = \FIN(\bar{s}, \alpha) \restrict B$, which is not maximal on $B$ by item (1).
\end{proof}
\begin{lemma} \label{lem:sissame}
 Suppose $X$ is a countably infinite set.
 Suppose $\seq{D}{x}{\in}{X}$ and $\seq{\I}{x}{\in}{X}$ are indexed families so that for all $x\in X$, ${D}_{x}$ is an infinite set and ${\I}_{x}$ is an ideal on ${D}_{x}$ with the property that for each $A \subseteq {D}_{x}$, if $A \notin {\I}_{x}$, then ${\I}_{x}\restrict A$ is not maximal on $A$.
 Let $D= \upp{x}{X}{D}$ and $\J = \upi{{\FIN}_{X}}{\I}{x}$.
 Fix $A \subseteq D$ with $A \notin \J$.
 Then the following hold:
 \begin{enumerate}
  \item
   ${\s}_{\Pset(A)\slash \left(\J\restrict A \right)} \leq \s$;
  \item
   suppose $\kappa$ is a cardinal such that $\kappa < \s$ and for each $x \in \spt(A)$, $\kappa < {\s}_{\Pset(A(x)) \slash \left( {\I}_{x} \restrict A(x) \right)}$.
   Then ${\s}_{\Pset(A) \slash \left( \J \restrict A \right)} > \kappa$.
 \end{enumerate}
\end{lemma}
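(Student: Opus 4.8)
The plan is to decouple the quotient $\Pset(A)\slash(\J\restrict A)$ into a ``horizontal'' component, governed by the classical splitting number $\s$ on the countable index set $X$, and a ``vertical'' component, governed by the cardinals $\s_{\Pset(A(x))\slash(\I_x\restrict A(x))}$. Note first that $A\notin\J$ means $\spt(A)$ is an infinite, hence countably infinite, subset of $X$; that $\Pset(A)\slash(\J\restrict A)$ is non-atomic by Lemma~\ref{lem:notmaximal1} applied to its positive elements together with Propositions~\ref{prop:nonatomicsplit} and~\ref{prop:nonmaximalnonatomic}, so $\s_{\Pset(A)\slash(\J\restrict A)}$ is defined; and that $\s_{\Pset(\spt(A))\slash{\FIN}_{\spt(A)}}=\s$ since $\spt(A)$ is countably infinite. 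For (1), transport a splitting family on $\omega$ along a bijection to obtain a splitting family $\mathcal{S}\subseteq\Pset(\spt(A))$ with $\card{\mathcal{S}}=\s$, and for $S\in\mathcal{S}$ set $C_S=\coprod_{x\in S}A(x)$, so that by Proposition~\ref{prop:ax} one has $C_S(x)=A(x)$ for $x\in S$ and $C_S(x)=\emptyset$ otherwise. Given any $B\subseteq A$ with $B\notin\J$, the set $\spt(B)$ is an infinite subset of $\spt(A)$, so there is $S\in\mathcal{S}$ with $\spt(B)\cap S$ and $\spt(B)\setminus S$ both infinite; since $B(x)\subseteq A(x)$ throughout, Proposition~\ref{prop:ax} yields $\spt(B\cap C_S)=\spt(B)\cap S$ and $\spt(B\cap(A\setminus C_S))=\spt(B)\setminus S$, so both $B\cap C_S$ and $B\setminus C_S$ are $\J$-positive, i.e.\ ${\lsb C_S\rsb}_{\J\restrict A}$ splits ${\lsb B\rsb}_{\J\restrict A}$. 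Hence $\{{\lsb C_S\rsb}_{\J\restrict A}:S\in\mathcal{S}\}$ is a splitting family and $\s_{\Pset(A)\slash(\J\restrict A)}\leq\s$.

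For (2), fix an arbitrary $\FFF=\{{\lsb F_\alpha\rsb}_{\J\restrict A}:\alpha<\kappa\}\subseteq\Pset(A)\slash(\J\restrict A)$ with each $F_\alpha\subseteq A$; it suffices to produce a $\J$-positive $B\subseteq A$ split by no member of $\FFF$. \emph{Vertical step.} For each $x\in\spt(A)$ the family $\{{\lsb F_\alpha(x)\rsb}_{\I_x\restrict A(x)}:\alpha<\kappa\}$ has size at most $\kappa<\s_{\Pset(A(x))\slash(\I_x\restrict A(x))}$ and therefore is not splitting, so choose an $\I_x$-positive $B_x\subseteq A(x)$ such that for every $\alpha<\kappa$ either $B_x\cap F_\alpha(x)\in\I_x$ or $B_x\setminus F_\alpha(x)\in\I_x$. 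For $\alpha<\kappa$ put $P_\alpha=\{x\in\spt(A):B_x\cap F_\alpha(x)\notin\I_x\}$ and $Q_\alpha=\{x\in\spt(A):B_x\setminus F_\alpha(x)\notin\I_x\}$. Since $B_x\notin\I_x$ is the union of $B_x\cap F_\alpha(x)$ and $B_x\setminus F_\alpha(x)$, at least one of these is $\I_x$-positive, while by the choice of $B_x$ at most one is; hence $\{P_\alpha,Q_\alpha\}$ is a partition of $\spt(A)$, and in particular $Q_\alpha=\spt(A)\setminus P_\alpha$.

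\emph{Horizontal step.} The family $\{P_\alpha:\alpha<\kappa\}$ consists of at most $\kappa<\s$ subsets of the countable set $\spt(A)$, hence is not splitting, so there is an infinite $Z\subseteq\spt(A)$ such that for every $\alpha$ either $Z\cap P_\alpha$ or $Z\setminus P_\alpha=Z\cap Q_\alpha$ is finite. Let $B=\coprod_{x\in Z}B_x$; by Proposition~\ref{prop:ax}, $B(x)=B_x$ for $x\in Z$ and $B(x)=\emptyset$ otherwise, so $B\subseteq A$, $\spt(B)=Z$ is infinite, and $B\notin\J$. For each $\alpha<\kappa$, Proposition~\ref{prop:ax} gives $\spt(B\cap F_\alpha)=Z\cap P_\alpha$ and $\spt(B\setminus F_\alpha)=Z\cap Q_\alpha$, one of which is finite, so either $B\cap F_\alpha\in\J$ or $B\setminus F_\alpha\in\J$, and in either case ${\lsb F_\alpha\rsb}_{\J\restrict A}$ does not split ${\lsb B\rsb}_{\J\restrict A}$. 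Thus $\FFF$ is not a splitting family in $\Pset(A)\slash(\J\restrict A)$, and as $\FFF$ was an arbitrary family of size $\leq\kappa$, we conclude $\s_{\Pset(A)\slash(\J\restrict A)}>\kappa$.

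The step I expect to require the most care is the junction of the vertical and horizontal steps: what is really used is that $P_\alpha$ and $Q_\alpha$ \emph{partition} $\spt(A)$ — not merely that they are disjoint, and not merely that they cover $\spt(A)$ — because that is exactly what makes ``$\{P_\alpha:\alpha<\kappa\}$ fails to split $Z$'' equivalent to ``for every $\alpha$, $Z\cap P_\alpha$ or $Z\cap Q_\alpha$ is finite'', which is in turn what is needed for $B$ to be unsplit. Everything else reduces to routine support computations via Proposition~\ref{prop:ax} and the identification of $\s$ with the splitting number of $\Pset(\spt(A))\slash{\FIN}_{\spt(A)}$; note that part (1) never uses the non-maximality hypothesis on the $\I_x$, whereas part (2) uses it (through the hypothesis) to guarantee that each $\s_{\Pset(A(x))\slash(\I_x\restrict A(x))}$ is a well-defined cardinal.
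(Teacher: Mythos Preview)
Your proof is correct and follows essentially the same route as the paper's: for (1) you lift a splitting family on $\spt(A)$ to sets of the form $\coprod_{x\in S}A(x)$, and for (2) you first choose an unsplit $B_x\subseteq A(x)$ in each fibre, then use $\kappa<\s$ on the index set to select an infinite $Z\subseteq\spt(A)$ unsplit by the sets $P_\alpha$, and assemble $B=\coprod_{x\in Z}B_x$. The paper's sets $Y_\alpha=\{x:B_x\subseteq_{\I_x}B_\alpha(x)\}$ coincide with your $P_\alpha$, and your explicit verification that $\{P_\alpha,Q_\alpha\}$ partitions $\spt(A)$ is exactly what underlies the paper's Case~1/Case~2 analysis.
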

\begin{proof}
 We prove (1) first.
 Define $Y=\spt(A)$.
 $Y$ is a countably infinite set.
 By the definition of $\s$, find a splitting family $\GG = \left\{ {\[ {Y}_{\alpha} \]}_{{\FIN}_{Y}}: \alpha < \s \right\}$ in $\Pset(Y) \slash {\FIN}_{Y}$.
 For each $\alpha < \s$, define ${B}_{\alpha} = \displaystyle\coprod_{x \in {Y}_{\alpha}}{A(x)}$.
 Then ${B}_{\alpha} \subseteq A$.
 Now suppose $B \subseteq A$ and $B \notin \J\restrict A$.
 Then $\spt(B)$ is an infinite subset of $\spt(A)=Y$.
 By the choice of $\GG$, find $\alpha < \s$ so that both ${Y}_{\alpha} \cap \spt(B)$ and $\left( Y \setminus {Y}_{\alpha} \right) \cap \spt(B)$ are infinite.
 It is necessary to check that ${B}_{\alpha} \cap B$ and $\left( A \setminus {B}_{\alpha} \right) \cap B$ don't belong to $\J \restrict A$.
 It suffices to see that they are not in $\J$.
 For this, it is enough to show that $\spt({B}_{\alpha} \cap B)$ and $\spt \left( \left( A \setminus {B}_{\alpha} \right) \cap B \right)$ are infinite.
 It is not difficult to verify that ${Y}_{\alpha} \cap \spt(B) \subseteq \spt({B}_{\alpha} \cap B)$ and that $\left( Y \setminus {Y}_{\alpha} \right) \cap \spt(B) \subseteq \spt \left( \left( A \setminus {B}_{\alpha} \right) \cap B \right)$, which shows that $\spt({B}_{\alpha} \cap B)$ and $\spt \left( \left( A \setminus {B}_{\alpha} \right) \cap B \right)$ are infinite.
 It now follows that $\left\{ {\[ {B}_{\alpha} \]}_{\J \restrict A}: \alpha < \s \right\}$ is a splitting family in $\Pset(A) \slash \left( \J \restrict A \right)$, which establishes (1).
 
 To prove (2), let $\{{b}_{\alpha}: \alpha < \kappa\} \subseteq \Pset(A) \slash \left( \J \restrict A \right)$ be given.
 Select ${B}_{\alpha} \subseteq A$ with ${\[ {B}_{\alpha} \]}_{\Pset(A) \slash \left( \J \restrict A \right)} = {b}_{\alpha}$.
 Define $Y = \spt(A)$.
 As ${B}_{\alpha}(x) \subseteq A(x)$, the hypothesis on $\kappa$ implies that $\left\{ {\[ {B}_{\alpha}(x) \]}_{\Pset(A(x)) \slash \left( {\I}_{x} \restrict A(x) \right)}: \alpha < \kappa \right\}$ is not a splitting family in $\Pset(A(x)) \slash \left( {\I}_{x} \restrict A(x) \right)$, for each $x \in Y$.
 Hence for each $x \in Y$, it is possible to find ${B}_{x} \subseteq A(x)$ with ${B}_{x} \notin {\I}_{x} \restrict A(x)$ and with the property that for every $\alpha < \kappa$, either ${B}_{x} \: {\subseteq}_{\left( {\I}_{x} \restrict A(x) \right)} \: {B}_{\alpha}(x)$ or ${B}_{x} \: {\subseteq}_{\left( {\I}_{x} \restrict A(x) \right)} \: A(x) \setminus {B}_{\alpha}(x)$.
 Now for each $\alpha < \kappa$, let ${Y}_{\alpha} = \left\{ x \in Y: {B}_{x} \: {\subseteq}_{\left( {\I}_{x} \restrict A(x) \right)} \: {B}_{\alpha}(x) \right\}$.
 As $Y$ is a countably infinite set and $\kappa < \s$, there is an infinite set $Z \subseteq Y$ so that for all $\alpha < \kappa$, either $Z \: {\subseteq}_{{\FIN}_{Y}} \: {Y}_{\alpha}$ or $Z \: {\subseteq}_{{\FIN}_{Y}} \: Y \setminus {Y}_{\alpha}$.
 Define $B = \upp{x}{Z}{B}$.
 It is clear $B \subseteq A$ and it is easy to verify that $Z \subseteq \spt(B)$.
 Since $Z$ is infinite, $B \notin \J$, whence $B \notin \J\restrict A$.
 Therefore $b = {\[ B \]}_{\Pset(A) \slash \left( \J\restrict A \right)} > 0$, and it will be shown that for each $\alpha < \kappa$, either $b \leq {b}_{\alpha}$ or $b \leq 1-{b}_{\alpha}$, which will show that $\{{b}_{\alpha}: \alpha < \kappa\}$ is not a splitting family in $\Pset(A) \slash \left( \J\restrict A \right)$.
 To see this, fix $\alpha < \kappa$.
 There are two cases to consider.
 
 Case 1: $Z \: {\subseteq}_{{\FIN}_{Y}} \: {Y}_{\alpha}$.
 Thus $F = Z \setminus {Y}_{\alpha} \in {\FIN}_{Y}$.
 In this case, $b \leq {b}_{\alpha}$ holds.
 In other words, $B \setminus {B}_{\alpha} \in \J \restrict A$.
 To show this, it is enough to show $B \setminus {B}_{\alpha} \in \J$, which is implied by showing $\spt\left( B \setminus {B}_{\alpha} \right)$ is finite, which in turn is implied by showing $\spt\left( B \setminus {B}_{\alpha} \right) \subseteq F$.
 Indeed if $x \in \spt \left( B \setminus {B}_{\alpha} \right)$, then $x \in X$ and $\left( B \setminus {B}_{\alpha} \right) (x) \notin {\I}_{x}$.
 In particular, $B(x) \neq \emptyset$, which implies $x \in Z$ because if $x$ were not in $Z$, then by the definition of $B$ as $\upp{x}{Z}{B}$, $B(x)$ would be empty, contradicting $\left( B \setminus {B}_{\alpha} \right) (x) \notin {\I}_{x}$.
 If $x$ were in ${Y}_{\alpha}$, then by definition of ${Y}_{\alpha}$, ${B}_{x} \setminus {B}_{\alpha}(x)$ would be in ${\I}_{x} \restrict A(x)$.
 However this would be a contradiction because $\left( B \setminus {B}_{\alpha} \right) (x) \subseteq {B}_{x} \setminus {B}_{\alpha}(x)$.
 Therefore, $x \in Z \setminus {Y}_{\alpha} = F$.
 This concludes Case 1.
 
 Case 2: $Z \: {\subseteq}_{{\FIN}_{Y}} \: \left( Y \setminus {Y}_{\alpha} \right)$.
 Thus $F = Z \cap {Y}_{\alpha}$ is finite.
 In this case $b \leq 1-{b}_{\alpha}$ holds.
 In other words, $B \cap {B}_{\alpha} \in \J\restrict A$.
 For this, it suffices to show $B \cap {B}_{\alpha} \in \J$, which is implied by showing $\spt \left( B \cap {B}_{\alpha} \right)$ is finite, and this in turn is implied by showing $\spt \left( B \cap {B}_{\alpha} \right) \subseteq F$.
 Indeed if $x \in \spt \left( B \cap {B}_{\alpha} \right)$, then $x \in X$ and $\left( B \cap {B}_{\alpha} \right) (x) \notin {\I}_{x}$, whence $B(x) \cap {B}_{\alpha}(x) \notin {\I}_{x}$.
 This immediately gives $x \in Z \cap {Y}_{\alpha} = F$, concluding the proof of Case 2 and of the lemma.
\end{proof}
\begin{corol} \label{cor:sissame}
 Suppose $\bar{s} = \seq{s}{\alpha}{\in}{\Lim({\omega}_{1})}$ is so that $\forall \alpha \in \Lim({\omega}_{1}) \[ {s}_{\alpha}: \omega \rightarrow \alpha \ \text{is a bijection} \]$.
 Then for any $A \subseteq D(\bar{s}, \alpha)$ with $A \notin \FIN(\bar{s}, \alpha)$, ${\s}_{\Pset(A) \slash \left( \FIN(\bar{s}, \alpha) \restrict A \right)} = \s$.
\end{corol}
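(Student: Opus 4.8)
The plan is to prove the equality by induction on $\alpha < {\omega}_{1}$, with $\alpha$ understood as universally quantified and Lemma~\ref{lem:sissame} supplying both inequalities at every $\alpha > 0$.

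For the base case $\alpha = 0$, I would note that $D(\bar{s}, 0) = \omega$ and $\FIN(\bar{s}, 0) = {\FIN}_{\omega}$, so for infinite $A \subseteq \omega$ one has ${\FIN}_{\omega} \restrict A = {\FIN}_{A}$; any bijection $\omega \to A$ is an isomorphism from ${\FIN}_{\omega}$ to ${\FIN}_{A}$ by Proposition~\ref{prop:iso}(1), hence $\Pset(A) \slash {\FIN}_{A}$ is isomorphic to $\Pset(\omega) \slash {\FIN}_{\omega}$, and since ${\s}_{\BB}$ depends only on the isomorphism type of the non-atomic Boolean algebra $\BB$, this gives ${\s}_{\Pset(A) \slash {\FIN}_{A}} = \s$.

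For the inductive step, assume $\alpha > 0$ and that the result holds at every $\beta < \alpha$. By Definition~\ref{def:Dfinalpha}, $D(\bar{s}, \alpha) = \upp{n}{\omega}{D}$ and $\FIN(\bar{s}, \alpha) = \upi{{\FIN}_{\omega}}{\I}{n}$, where ${D}_{n} = D(\bar{s}, {\beta}_{n})$ and ${\I}_{n} = \FIN(\bar{s}, {\beta}_{n})$ with ${\beta}_{n} < \alpha$ for every $n$ (taking ${\beta}_{n} = \xi$ when $\alpha = \xi + 1$, and ${\beta}_{n} = {s}_{\alpha}(n)$ when $\alpha$ is a limit). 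By Corollary~\ref{cor:salpha}(1), each ${\I}_{n}$ satisfies the hypothesis of Lemma~\ref{lem:sissame} that ${\I}_{n} \restrict B$ is not maximal on $B$ whenever $B \subseteq {D}_{n}$ and $B \notin {\I}_{n}$; so Lemma~\ref{lem:sissame} applies with $\omega$ as $X$, $\seq{D}{n}{\in}{\omega}$ and $\seq{\I}{n}{\in}{\omega}$ as the indexed families, and $\J = \FIN(\bar{s}, \alpha)$. Now fix $A \subseteq D(\bar{s}, \alpha)$ with $A \notin \FIN(\bar{s}, \alpha)$. Part (1) of the lemma immediately gives ${\s}_{\Pset(A) \slash \left( \J \restrict A \right)} \leq \s$. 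For the reverse inequality I would let $\kappa < \s$ be an arbitrary cardinal and verify the hypotheses of part (2): for each $n \in \spt(A)$ we have $A(n) \subseteq {D}_{n} = D(\bar{s}, {\beta}_{n})$ and $A(n) \notin {\I}_{n}$, so the induction hypothesis at ${\beta}_{n}$ gives ${\s}_{\Pset(A(n)) \slash \left( {\I}_{n} \restrict A(n) \right)} = \s > \kappa$. Lemma~\ref{lem:sissame}(2) then yields ${\s}_{\Pset(A) \slash \left( \J \restrict A \right)} > \kappa$; since $\kappa < \s$ was arbitrary, ${\s}_{\Pset(A) \slash \left( \J \restrict A \right)} \geq \s$, and combining with part (1) completes the induction.

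I do not expect a serious obstacle. The genuine interaction between the product construction and splitting families has already been isolated in Lemma~\ref{lem:sissame}, so the remaining work is just matching the products of Definition~\ref{def:Dfinalpha} against the indexed families of that lemma with the correct smaller ordinals ${\beta}_{n}$, together with the routine base-case remark that the classical splitting number is unchanged under relativization to an infinite subset of $\omega$.
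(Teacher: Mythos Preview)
Your proposal is correct and follows essentially the same approach as the paper: induction on $\alpha$, with the base case handled by the isomorphism $\Pset(A)\slash{\FIN}_{A}\cong\Pset(\omega)\slash{\FIN}_{\omega}$, and the inductive step obtained by applying both parts of Lemma~\ref{lem:sissame} after invoking Corollary~\ref{cor:salpha}(1) for the non-maximality hypothesis and the induction hypothesis for the fiberwise splitting numbers. The only cosmetic difference is that the paper phrases the lower bound as a contradiction (setting $\kappa={\s}_{\Pset(A)\slash(\J\restrict A)}$ and deriving a contradiction from $\kappa<\s$), whereas you quantify over all $\kappa<\s$ directly; the content is identical.
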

\begin{proof}
 By the results established above, $\kappa = {\s}_{\Pset(A) \slash \left( \FIN(\bar{s}, \alpha) \restrict A \right)}$ is always well-defined for any relevant $A$.
 Now proceed by induction on $\alpha$.
 If $\alpha = 0$, then $D(\bar{s}, \alpha) = \omega$ and $\FIN(\bar{s}, \alpha) = {\FIN}_{\omega}$.
 So $A \subseteq \omega$ with $A$ infinite.
 Thus $\FIN(\bar{s}, \alpha) \restrict A = \left( {\FIN}_{\omega} \right) \restrict A = {\FIN}_{A}$.
 Therefore $\Pset(A) \slash \left( \FIN(\bar{s}, \alpha) \restrict A \right) = \Pset(A) \slash {\FIN}_{A}$, which is isomorphic to $\Pset(\omega) \slash {\FIN}_{\omega}$.
 So ${\s}_{\Pset(A) \slash \left( \FIN(\bar{s}, \alpha) \restrict A \right)} = \s$.
 If $\alpha > 0$, then $D(\bar{s}, \alpha) = \upp{n}{\omega}{D}$ and $\FIN(\bar{s}, \alpha) = \upi{{\FIN}_{\omega}}{\I}{n}$, where each ${D}_{n}$ has the form $D(\bar{s}, \xi)$ and ${\I}_{n}$ has the form $\FIN(\bar{s}, \xi)$ for some $\xi < \alpha$.
 In particular, each ${D}_{n}$ is an infinite set and ${\I}_{n}$ is an ideal on ${D}_{n}$ with the property that for any $C \subseteq {D}_{n}$, if $C \notin {\I}_{n}$, then ${\I}_{n} \restrict C$ is not maximal on $C$.
 Hence $\kappa \leq \s$ by (1) of Lemma \ref{lem:sissame}.
 On the other hand, $\kappa$ cannot be strictly less than $\s$.
 For if $\kappa < \s$, then by the induction hypothesis, for each $n \in \spt(A)$, $\kappa < \s = {\s}_{\Pset(A(n)) \slash \left( {\I}_{n} \restrict A(n) \right)}$.
 And so by (2) of Lemma \ref{lem:sissame}, there can be no splitting family of size $\kappa$ in $\Pset(A) \slash \left( \FIN(\bar{s}, \alpha) \restrict A \right)$, contradicting the definition of $\kappa$.
 Therefore $\kappa = \s$.
\end{proof}
\begin{quest} \label{q:others}
 What are the possible values for $\mathfrak s_{\mathcal I}$ relative to other cardinal invariants if $\mathcal I$ is an $F_\sigma$ ideal?
 What of the specific case for the summable ideal? 
\end{quest}
Corollary \ref{cor:sissame} says that the splitting number is not a new cardinal invariant for the ideals from Definition \ref{def:Dfinalpha} or for any of their restrictions.
Their almost disjointness numbers are examined next.
\begin{defin} \label{def:aalpha}
 For each $\alpha \in {\omega}_{1}$, define ${\a}_{\alpha} = {\a}_{\Pset(D(\bar{s}, \alpha)) \slash \FIN(\bar{s}, \alpha)}$, where $\bar{s} = \seq{s}{\alpha}{\in}{\Lim({\omega}_{1})}$ is any sequence so that $\forall \alpha \in \Lim({\omega}_{1})\[{s}_{\alpha}: \omega \rightarrow \alpha \ \text{is a bijection}\]$.
 By Lemma \ref{lem:noalpha}, the choice of $\bar{s}$ is immaterial.
\end{defin}
\begin{propo}\label{Bpoig}
 If  $\mathcal J_n$ are ideals on the infinite sets $Y_n$ for $n\in \omega$, then $\mathfrak a_{\coprod_{{\FIN}_{\omega}} \mathcal J_n}\geq \mathfrak b$.
\end{propo}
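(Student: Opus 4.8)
The plan is to argue by contradiction: assuming there is an infinite maximal antichain in $\Pset(D)/\J$ of size $\kappa<\b$, where $D=\coprod_{n\in\omega}{Y}_{n}$ and $\J=\coprod_{{\FIN}_{\omega}}{\J}_{n}$, I would produce a nonzero element of the quotient disjoint from every member of that antichain, contradicting maximality. (That $\Pset(D)/\J$ is non-atomic, so that $\a_{\J}$ is even well defined, follows at once from Lemma~\ref{lem:notmaximal1} and Proposition~\ref{prop:nonmaximalnonatomic}.) So fix such an antichain, listed as $\left\{ {\lsb {A}_{\alpha} \rsb}_{\J}:\alpha<\kappa \right\}$ with the ${\lsb {A}_{\alpha} \rsb}_{\J}$ distinct; since it is infinite, $\omega\le\kappa$, and each ${A}_{\alpha}\notin\J$, i.e.\ $\spt({A}_{\alpha})$ is infinite. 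It suffices to find $B\subseteq D$ with $B\notin\J$ and $B\cap{A}_{\alpha}\in\J$ for all $\alpha<\kappa$: then ${\lsb B \rsb}_{\J}$ is nonzero, distinct from every ${\lsb {A}_{\alpha} \rsb}_{\J}$, and satisfies ${\lsb B \rsb}_{\J}\wedge{\lsb {A}_{\alpha} \rsb}_{\J}=0$ for each $\alpha$, so the antichain was not maximal.

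The first step is to single out a \emph{countable core} and tidy it. Pick distinct ${\alpha}_{k}$, $k\in\omega$, and set ${C}_{k}={A}_{{\alpha}_{k}}\setminus\bigcup_{i<k}{{A}_{{\alpha}_{i}}}$. Since the ${A}_{\alpha}$ form an antichain, ${C}_{k}\equiv_{\J}{A}_{{\alpha}_{k}}$, so this merely reselects representatives of the core classes; now the ${C}_{k}$ are pairwise disjoint subsets of $D$, each ${C}_{k}\notin\J$ (so $\spt({C}_{k})$ is infinite), and whenever ${\lsb {A}_{\alpha} \rsb}_{\J}\ne{\lsb {C}_{k} \rsb}_{\J}$ one has ${A}_{\alpha}\cap{C}_{k}\in\J$, hence $\spt({A}_{\alpha}\cap{C}_{k})$ is finite. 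For each such $\alpha$ define ${f}_{\alpha}\colon\omega\to\omega$ by ${f}_{\alpha}(k)=1+\max\spt({A}_{\alpha}\cap{C}_{k})$ (reading $\max\emptyset=0$). There are at most $\kappa<\b$ of these functions, so fix $g\colon\omega\to\omega$ with ${f}_{\alpha}\leq^{\ast}g$ for every such $\alpha$.

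Now recursively choose ${n}_{0}<{n}_{1}<\cdots$ with ${n}_{k}\in\spt({C}_{k})$ and ${n}_{k}\ge g(k)$; this is possible because each $\spt({C}_{k})$ is infinite, hence unbounded. Put $N=\left\{ {n}_{k}:k\in\omega \right\}$ and let $B\subseteq D$ be the set with $B({n}_{k})={C}_{k}({n}_{k})$ for every $k$ and $B(n)=\emptyset$ for $n\notin N$ (equivalently $B=\coprod_{n\in N}{{B}_{n}}$ with ${B}_{{n}_{k}}={C}_{k}({n}_{k})$). Since ${n}_{k}\in\spt({C}_{k})$ we get $B({n}_{k})\notin{\J}_{{n}_{k}}$, so $N\subseteq\spt(B)$ and $B\notin\J$. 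For disjointness, fix $\alpha<\kappa$. If ${\lsb {A}_{\alpha} \rsb}_{\J}={\lsb {C}_{i} \rsb}_{\J}$ for some $i$, then, the ${C}_{k}$ being pairwise disjoint, $(B\cap{C}_{i})(n)$ is empty except possibly at $n={n}_{i}$, so $\spt(B\cap{C}_{i})$ is finite and $B\cap{A}_{\alpha}\in\J$ as well (since $B\cap{A}_{\alpha}\equiv_{\J}B\cap{C}_{i}$). Otherwise ${f}_{\alpha}$ is defined; let $K$ be such that $g(k)\ge{f}_{\alpha}(k)$ for all $k\ge K$. For $k\ge K$ we have ${n}_{k}\ge g(k)\ge{f}_{\alpha}(k)>\max\spt({A}_{\alpha}\cap{C}_{k})$, so ${n}_{k}\notin\spt({A}_{\alpha}\cap{C}_{k})$, i.e.\ $(B\cap{A}_{\alpha})({n}_{k})={C}_{k}({n}_{k})\cap{A}_{\alpha}({n}_{k})\in{\J}_{{n}_{k}}$; and $(B\cap{A}_{\alpha})(n)=\emptyset$ for $n\notin N$. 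Hence $\spt(B\cap{A}_{\alpha})\subseteq\left\{ {n}_{k}:k<K \right\}$ is finite and $B\cap{A}_{\alpha}\in\J$. This gives the desired $B$ and finishes the contradiction.

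The conceptual crux — and the only step I expect to be a genuine obstacle — is the choice of diagonalization target: one cannot hope to diagonalize directly against all of the (possibly uncountably many) sets ${A}_{\alpha}(n)$ level by level, since already finitely many of them may cover ${Y}_{n}$ modulo ${\J}_{n}$. The right move is to diagonalize only against a fixed countable subfamily, spending one level ${n}_{k}$ per core set, and to let $\b$ absorb every remaining ${A}_{\alpha}$ through the finiteness of the supports $\spt({A}_{\alpha}\cap{C}_{k})$; passing to pairwise disjoint representatives ${C}_{k}$ is an inessential convenience that trivializes the first case of the disjointness check.
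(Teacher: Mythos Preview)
Your proof is correct and follows essentially the same route as the paper's: both extract a countable pairwise-disjoint subfamily (your $C_k$, the paper's $A_n^\ast$), encode the $\J$-almost-disjointness of each remaining $A_\alpha$ from this core as a function $\omega\to\omega$, dominate these $<\b$ many functions, and then pick one level $n_k\in\spt(C_k)$ above the dominating value to assemble the witness $B$. Your write-up is in fact more careful than the paper's sketch (you explicitly enforce $n_0<n_1<\cdots$ and carry out the two-case verification that the paper leaves as ``routine''), but there is no substantive difference in strategy.
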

\begin{proof}
 Suppose that $\mathcal A$ is a family such that:
 \begin{itemize}
  \item $|\mathcal A|<\mathfrak b$
  \item $\mathcal A\subseteq (\coprod_{{\FIN}_{\omega}} \mathcal J_n)^+$
  \item if $A$ and $B$ are distinct elements of $\mathcal A$ then $A\cap B \in \coprod_{{\FIN}_{\omega}} \mathcal J_n$.
 \end{itemize}
 Let $\{A_n\}_{n\in \omega}$ be distinct elements of $\mathcal A$. It is easy to find $A_n^*\subseteq A_n$ such that
 $A_n^*\notin \coprod_{{\FIN}_{\omega}} \mathcal J_n$ and $A^*_n\cap A^*_m=\varnothing$ for distinct $n$ and $m$. 
 For each $A\in \mathcal A\setminus \{A_n\}_{n\in \omega}$ define $F_A:\omega \to \omega$ such that
 $A(k)\cap A_n^*(k)\in \mathcal J_k$ for all $k\geq F_A(n)$.
 
 There is then
 $F:\omega \to \omega$ such that $F\geq^* F_A$ for all $A\in \mathcal A\setminus \{A_n\}_{n\in \omega}$.
 Find a function $F': \omega \rightarrow \omega$ such that for each $n \in \omega$, $F'(n) > F(n)$ and ${A}^{\ast}_{n}(F'(n)) \notin {\J}_{F'(n)}$.
 Define $V=\coprod_{n\in \omega}A^*_n(F'(n))$. It is routine to check that $V\cap A\in \coprod_{{\FIN}_{\omega}} \mathcal J_n$ for each $A\in \mathcal A$ showing that $\mathcal A$ is not maximal.
\end{proof}
\begin{corol}\label{Kkjsgd}
 $\mathfrak a_\alpha\geq \mathfrak b$ for all $\alpha \in \omega_1$.
\end{corol}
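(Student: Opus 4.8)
The plan is to split into the base case $\alpha = 0$ and the case $\alpha > 0$, and to observe that in the latter case the ideal $\FIN(\bar s, \alpha)$ is literally of the form to which Proposition~\ref{Bpoig} applies. Fix a sequence $\bar s = \seq{s}{\alpha}{\in}{\Lim({\omega}_{1})}$ of bijections ${s}_{\alpha}: \omega \to \alpha$; by Lemma~\ref{lem:noalpha} (or Definition~\ref{def:aalpha}) the choice is irrelevant, so it suffices to bound ${\a}_{\Pset(D(\bar s, \alpha)) \slash \FIN(\bar s, \alpha)}$ from below by $\b$ for this particular $\bar s$.

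For $\alpha = 0$, Definition~\ref{def:Dfinalpha} gives $D(\bar s, 0) = \omega$ and $\FIN(\bar s, 0) = {\FIN}_{\omega}$, so ${\a}_{0}$ is precisely the classical almost disjointness number $\a$ (an infinite maximal antichain in $\Pset(\omega)\slash{\FIN}_{\omega}$ is exactly an infinite MAD family on $\omega$). The inequality $\b \leq \a$ is a theorem of $\ZFC$, so this case is immediate; alternatively one can observe that the diagonalization argument used in the proof of Proposition~\ref{Bpoig} specializes to reprove $\b \le \a$ directly, so no new work is needed.

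For $\alpha > 0$, inspection of clauses (2) and (3) of Definition~\ref{def:Dfinalpha} shows that $D(\bar s, \alpha)$ has the form $\coprod_{n \in \omega}{{D}_{n}}$ and $\FIN(\bar s, \alpha)$ has the form $\coprod_{{\FIN}_{\omega}}{{\I}_{n}}$, where each ${D}_{n}$ is an infinite set and ${\I}_{n}$ is an ideal on ${D}_{n}$: when $\alpha = \xi + 1$ one takes ${D}_{n} = D(\bar s, \xi)$ and ${\I}_{n} = \FIN(\bar s, \xi)$ for every $n$, and when $\alpha$ is a limit one takes ${D}_{n} = D(\bar s, {s}_{\alpha}(n))$ and ${\I}_{n} = \FIN(\bar s, {s}_{\alpha}(n))$. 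In either case Proposition~\ref{Bpoig} applies verbatim and yields ${\a}_{\alpha} = {\a}_{\coprod_{{\FIN}_{\omega}}{{\I}_{n}}} \geq \b$.

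There is essentially no obstacle here: the entire content of the corollary has already been isolated in Proposition~\ref{Bpoig}, and the only point to check is the purely notational observation that every $\FIN(\bar s, \alpha)$ with $\alpha \geq 1$ is a $\coprod_{{\FIN}_{\omega}}$-product of ideals over its infinite index set $\omega$, together with the standard fact $\b \le \a$ for the degenerate case $\alpha = 0$.
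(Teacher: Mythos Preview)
Your proposal is correct and matches the paper's intended argument: the paper states the corollary with no proof, treating it as immediate from Proposition~\ref{Bpoig}, and your writeup simply makes explicit the case split ($\alpha = 0$ via the classical $\b \leq \a$, $\alpha > 0$ by unwinding Definition~\ref{def:Dfinalpha} to see that $\FIN(\bar s,\alpha)$ is a $\coprod_{{\FIN}_{\omega}}$-product) that the reader is expected to fill in.
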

\begin{propo} \label{prop:aupper}
 If ${\I}_{n}$ are ideals on the infinite sets ${D}_{n}$ for $n \in \omega$, then ${\a}_{\J} \leq \a$, where $\J$ is the ideal $\upi{{\FIN}_{\omega}}{\I}{n}$ on $D = \upp{n}{\omega}{D}$.
\end{propo}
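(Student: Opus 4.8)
The plan is to transfer a maximal almost disjoint family on $\omega$ of minimal size into the quotient $\Pset(D) \slash \J$ along the natural Boolean embedding $Y \mapsto \coprod_{n \in Y} {D}_{n}$ and to check that maximality survives the transfer. Throughout, $D = \coprod_{n \in \omega} {D}_{n}$ and $\J = \upi{{\FIN}_{\omega}}{\I}{n}$, so that $\J = \{A \subseteq D : \spt(A) \text{ is finite}\}$; note in passing that $\J$ satisfies condition (5) of Definition \ref{def:ideal} by Lemma \ref{lem:notmaximal1} applied with $W = \omega$, so $\Pset(D) \slash \J$ is non-atomic by Proposition \ref{prop:nonmaximalnonatomic} and $\a_{\J}$ is well-defined.

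First I would set up the embedding and record the one computation that drives everything. Fix a maximal almost disjoint family $\{{A}_{\xi} : \xi < \a\}$ of infinite subsets of $\omega$ --- equivalently, an infinite maximal antichain $\left\{ {\[ {A}_{\xi} \]}_{{\FIN}_{\omega}} : \xi < \a \right\}$ in $\Pset(\omega) \slash {\FIN}_{\omega}$ --- witnessing the classical almost disjointness number $\a$. For $Y \subseteq \omega$ write $\hat{Y} = \coprod_{n \in Y} {D}_{n} \subseteq D$, applying Definition \ref{def:upsidedown} with index set $Y$ and ${D}_{n}$ in the role of ${A}_{n}$. By Proposition \ref{prop:ax}(5) one has $\hat{Y}(n) = {D}_{n}$ for $n \in Y$ and $\hat{Y}(n) = \emptyset$ for $n \notin Y$; since each ${\I}_{n}$ is a proper ideal on ${D}_{n}$ we have ${D}_{n} \notin {\I}_{n}$, and therefore $\spt(\hat{Y}) = Y$. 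More generally, for any $B \subseteq D$, Proposition \ref{prop:ax}(3) gives $(B \cap \hat{Y})(n) = B(n) \cap \hat{Y}(n)$, which (using $B(n) \subseteq {D}_{n}$) equals $B(n)$ when $n \in Y$ and $\emptyset$ when $n \notin Y$; hence the key identity $\spt(B \cap \hat{Y}) = \spt(B) \cap Y$.

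Next I would verify that $\GG = \left\{ {\[ \hat{{A}_{\xi}} \]}_{\J} : \xi < \a \right\}$ is an infinite antichain in $\Pset(D) \slash \J$ of cardinality $\a$: each $\spt(\hat{{A}_{\xi}}) = {A}_{\xi}$ is infinite, so ${\[ \hat{{A}_{\xi}} \]}_{\J} > 0$; and for distinct $\xi, \eta < \a$ the key identity gives $\spt(\hat{{A}_{\xi}} \cap \hat{{A}_{\eta}}) = {A}_{\xi} \cap {A}_{\eta}$, which is finite, so $\hat{{A}_{\xi}} \cap \hat{{A}_{\eta}} \in \J$ and ${\[ \hat{{A}_{\xi}} \]}_{\J} \wedge {\[ \hat{{A}_{\eta}} \]}_{\J} = 0$; in particular these classes are pairwise distinct, so $\card{\GG} = \a$ and $\GG$ is infinite. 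For maximality I would invoke the standard fact that an antichain in a Boolean algebra is maximal exactly when every nonzero element has nonzero meet with one of its members. So let $B \subseteq D$ with ${\[ B \]}_{\J} > 0$, i.e. $\spt(B)$ infinite; by maximality of $\{{A}_{\xi} : \xi < \a\}$ there is $\xi < \a$ with ${A}_{\xi} \cap \spt(B)$ infinite, and then the key identity yields $\spt(B \cap \hat{{A}_{\xi}}) = \spt(B) \cap {A}_{\xi}$ infinite, so $B \cap \hat{{A}_{\xi}} \notin \J$, i.e. ${\[ B \]}_{\J} \wedge {\[ \hat{{A}_{\xi}} \]}_{\J} > 0$. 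Therefore $\GG$ is an infinite maximal antichain in $\Pset(D) \slash \J$ of size $\a$, and $\a_{\J} \leq \a$ follows.

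The whole argument is bookkeeping with $\spt$, and I do not anticipate a real obstacle: everything collapses to the identity $\spt(B \cap \hat{Y}) = \spt(B) \cap Y$ together with the elementary remark that a classical MAD family and an infinite maximal antichain in $\Pset(\omega) \slash {\FIN}_{\omega}$ are the same object. The only points worth a second look are that the embedding $Y \mapsto \hat{Y}$ is applied with the correct indexed family in Definition \ref{def:upsidedown}, and that ``maximal antichain'' is read in the Boolean-algebra sense of Definition \ref{def:a}.
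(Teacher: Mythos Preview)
Your proof is correct and follows essentially the same approach as the paper: both fix a MAD family on $\omega$, push it forward along the embedding $Y \mapsto \coprod_{n \in Y} D_n$, and verify that positivity, disjointness, and maximality transfer via the computation $\spt(B \cap \hat{Y}) = \spt(B) \cap Y$. Your version is slightly more systematic in isolating this identity explicitly and in noting that $\a_\J$ is well-defined, but the argument is the same.
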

\begin{proof}
 Let $\{{Y}_{\alpha}: \alpha < \a \}$ be a m.a.d.\@ family on $\omega$.
 Define ${A}_{\alpha} = \upp{n}{{Y}_{\alpha}}{D}$.
 Then ${A}_{\alpha} \subseteq D$ and ${A}_{\alpha} \notin \J$ because ${Y}_{\alpha} \subseteq \spt({A}_{\alpha})$.
 If $\alpha < \beta < \a$ and $n \in \spt({A}_{\alpha} \cap {A}_{\beta})$, then ${A}_{\alpha}(n) \cap {A}_{\beta}(n) = \left( {A}_{\alpha} \cap {A}_{\beta} \right)(n) \notin {\I}_{n}$, whence $n \in {Y}_{\alpha} \cap {Y}_{\beta}$.
 Thus $\spt({A}_{\alpha} \cap {A}_{\beta}) \subseteq {Y}_{\alpha} \cap {Y}_{\beta}$, which is finite, and so ${A}_{\alpha} \cap {A}_{\beta} \in \J$.
 Finally, suppose that $A \subseteq D$ with $A \notin \J$.
 Then $\spt(A)$ is an infinite subset of $\omega$.
 Find $\alpha < \a$ so that $\spt(A) \cap {Y}_{\alpha}$ is infinite.
 If $n \in \spt(A) \cap {Y}_{\alpha}$, then $\left( A \cap {A}_{\alpha} \right)(n) = A(n) \cap {A}_{\alpha}(n) = A(n) \cap {D}_{n} = A(n) \notin {\I}_{n}$, whence $n \in \spt\left( A \cap {A}_{\alpha} \right)$.
 Thus $\spt(A) \cap {Y}_{\alpha} \subseteq \spt\left( A \cap {A}_{\alpha} \right)$, and so $A \cap {A}_{\alpha} \notin \J$.
 This shows that $\left\{ {\[{A}_{\alpha}\]}_{\Pset(D) \slash \J}: \alpha < \a \right\}$ is an infinite maximal almost disjoint family in $\Pset(D) \slash \J$.
\end{proof}
\begin{corol} \label{cor:aupper}
 For each $\alpha < {\omega}_{1}$, ${\a}_{\alpha} \leq \a$.
\end{corol}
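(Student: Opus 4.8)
The plan is to read this off from Proposition~\ref{prop:aupper} by inspecting the two possible shapes that the pair $\bigl(D(\bar{s},\alpha), \FIN(\bar{s},\alpha)\bigr)$ can take according to Definition~\ref{def:Dfinalpha}; no induction on $\alpha$ is needed, only a single case split on whether $\alpha = 0$.

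First I would dispose of the base case $\alpha = 0$. Here $D(\bar{s},0) = \omega$ and $\FIN(\bar{s},0) = {\FIN}_{\omega}$, so $\Pset(D(\bar{s},0)) \slash \FIN(\bar{s},0)$ is literally $\Pset(\omega)\slash{\FIN}_{\omega}$. The infinite maximal antichains in this Boolean algebra are exactly the $\equiv_{{\FIN}_{\omega}}$-classes of infinite maximal almost disjoint families on $\omega$, and conversely, so $\a_{0} = \a_{\Pset(\omega)\slash{\FIN}_{\omega}} = \a$; in particular $\a_{0} \le \a$.

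For $\alpha > 0$, whether $\alpha$ is a successor or a limit ordinal, Definition~\ref{def:Dfinalpha} presents $D(\bar{s},\alpha)$ in the form $\upp{n}{\omega}{D}$ and $\FIN(\bar{s},\alpha)$ in the form $\upi{{\FIN}_{\omega}}{\I}{n}$, where each ${D}_{n}$ is one of the previously built infinite sets $D(\bar{s},\xi)$ (with $\xi$ the predecessor of $\alpha$ in the successor case, and $\xi = {s}_{\alpha}(n)$ in the limit case) and ${\I}_{n} = \FIN(\bar{s},\xi)$ is the corresponding ideal on ${D}_{n}$, which is an ideal by Lemma~\ref{lem:Jisanideal}. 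Thus the hypotheses of Proposition~\ref{prop:aupper} are satisfied with $\J = \FIN(\bar{s},\alpha)$, and it delivers $\a_{\alpha} = \a_{\J} \le \a$.

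I do not anticipate any genuine obstacle: the content of the corollary is simply the observation that the ideals of Definition~\ref{def:Dfinalpha}, viewed at the top level, are countably-supported products of exactly the kind handled by Proposition~\ref{prop:aupper}, so the bound transfers verbatim. The only minor point requiring care is the $\alpha = 0$ case, which falls outside the product framework and must be handled by the direct identification of the quotient with $\Pset(\omega)\slash{\FIN}_{\omega}$ and hence of $\a_0$ with the classical $\a$.
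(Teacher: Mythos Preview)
Your proposal is correct and matches the paper's intent exactly: the paper states the corollary without proof, relying on Proposition~\ref{prop:aupper} for $\alpha > 0$ and noting immediately afterward that ${\a}_{0}$ is the classical invariant $\a$. Your case split and verification that the hypotheses of Proposition~\ref{prop:aupper} are met for $\alpha > 0$ is precisely the omitted justification.
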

Of course, ${\a}_{0}$ is the classical invariant $\a$.
By Corollaries \ref{Kkjsgd} and \ref{cor:aupper}, the cardinals ${\a}_{\alpha}$ stand sandwiched between $\b$ and $\a$.
It is unknown at present whether any of the ${\a}_{\alpha}$ can be distinguished from each other.
\begin{quest} \label{q:manya}
 Is it consistent to have ${\a}_{\alpha} < {\a}_{\beta}$ for some $\alpha, \beta < {\omega}_{1}$?
 For each $n \geq 1$, is it consistent to have $\b < {\a}_{n} < \dotsb < \a$?
\end{quest}
The invariant ${\a}_{1}$ was investigated by Brendle~\cite{Brend}.
Brendle considered both the questions of whether or not $\mathfrak a_1<\mathfrak a$ is consistent and weather or not $\b < {\a}_{1}$ is consistent.
He used a template style iteration similar to the one from Shelah~\cite{sh:700} to produce a model where ${\aleph}_{2} = \b < \a = {\a}_{1} = {\aleph}_{3}$.
Brendle~\cite{Brend} asked whether ${\aleph}_{1} = \b < {\a}_{1}$ is consistent.
The question of whether or not $\mathfrak a_1<\mathfrak a$ is consistent was again implicitly raised in \cite{AsgSch} and explicitly by T\"{o}rnquist at the Fields Set Theory meeting in May 2019.
\begin{quest}
What are the possible values for $\mathfrak a_{\mathcal I}$ relative to other cardinal invariants if $\mathcal I$ is an $F_\sigma$ ideal? What of the specific case for the summable ideal? 
\end{quest}
The next theorem, which is the main result of this paper, provides a $\ZFC$ lower bound for ${\a}_{\alpha}$ in terms of $\a$ and $\s$.
It will shed some light on Question \ref{q:manya} by constraining possible models of ${\a}_{\alpha} < \a$.
\begin{theor} \label{thm:main}
 Let $X$ be a countably infinite set and let $\seq{D}{x}{\in}{X}$ be an  indexed family of infinite sets.
 Suppose $\seq{\I}{x}{\in}{X}$ is an  indexed family such that ${\I}_{x}$ is an ideal on ${D}_{x}$ with the property that for every $A \subseteq {D}_{x}$, if $A \notin {\I}_{x}$, then ${\I}_{x} \restrict A$ is not maximal on $A$.
 Let $\kappa$ be an infinite cardinal.
 Assume that for each $x \in X$ and for every $A \subseteq {D}_{x}$ with $A \notin {\I}_{x}$, $\kappa < {\s}_{\Pset(A) \slash \left( {\I}_{x} \restrict A \right)}$ and that $\kappa < \a$.
 Let $D = \upp{x}{X}{D}$ and $\J = \upi{{\FIN}_{X}}{\I}{x}$.
 Then no $\left( \Pset(D) \slash \J \right)$-almost disjoint sequence $\seq{a}{\alpha}{<}{\kappa}$ is maximal in $\Pset(D) \slash \J$.
\end{theor}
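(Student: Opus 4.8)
The plan is to show that an arbitrary $\left( \Pset(D) \slash \J \right)$-almost disjoint sequence $\seq{a}{\alpha}{<}{\kappa}$ is not maximal by producing a single set $B \subseteq D$ with $B \notin \J$ and $B \cap {A}_{\alpha} \in \J$ for every $\alpha < \kappa$, where $\seq{A}{\alpha}{<}{\kappa}$ are fixed representatives; then ${\[ B \]}_{\J}$ is a nonzero element of $\Pset(D) \slash \J$ almost disjoint from every ${a}_{\alpha}$, witnessing non-maximality. Note that $\spt({A}_{\alpha})$ is infinite for each $\alpha$ (since ${A}_{\alpha} \notin \J$) and $\spt({A}_{\alpha} \cap {A}_{\beta})$ is finite for $\alpha \neq \beta$.

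\emph{Phase 1 (splitting inside each coordinate).} Fix $x \in X$. For any $A \subseteq {D}_{x}$ with $A \notin {\I}_{x}$, the hypothesis $\kappa < {\s}_{\Pset(A) \slash \left( {\I}_{x} \restrict A \right)}$ says that $\left\{ {\[ A \cap {A}_{\alpha}(x) \]}_{{\I}_{x} \restrict A}: \alpha < \kappa \right\}$ is not a splitting family in $\Pset(A) \slash \left( {\I}_{x} \restrict A \right)$, so there is $C \subseteq A$ with $C \notin {\I}_{x}$ such that for every $\alpha < \kappa$, either $C \cap {A}_{\alpha}(x) \in {\I}_{x}$ or $C \: {\subseteq}_{{\I}_{x}} \: {A}_{\alpha}(x)$. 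Thus such a $C_x \subseteq D_x$ always exists, and one may require $C_x$ to lie inside any prescribed $\I_x$-positive subset of $D_x$ --- in particular inside $D_x \setminus \bigcup_{\alpha \in F}{A}_{\alpha}(x)$ whenever $F \subseteq \kappa$ is finite and this set is $\I_x$-positive, which forces $C_x \cap {A}_{\alpha}(x) \in {\I}_{x}$ for all $\alpha \in F$. After choosing such $C_x$ for all $x$ (the precise choice is discussed below), set ${S}_{x} = \left\{ \alpha < \kappa: C_x \: {\subseteq}_{{\I}_{x}} \: {A}_{\alpha}(x) \right\}$ and ${E}_{\alpha} = \left\{ x \in X: \alpha \in {S}_{x} \right\}$ for $\alpha < \kappa$. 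Then $\left\{ {E}_{\alpha}: \alpha < \kappa \right\}$ is an almost disjoint family on the countable set $X$: if $\alpha \neq \beta$ both lie in ${S}_{x}$, then $C_x \: {\subseteq}_{{\I}_{x}} \: {A}_{\alpha}(x) \cap {A}_{\beta}(x)$, whence ${A}_{\alpha}(x) \cap {A}_{\beta}(x) \notin {\I}_{x}$ and $x \in \spt\left( {A}_{\alpha} \cap {A}_{\beta} \right)$, so ${E}_{\alpha} \cap {E}_{\beta}$ is finite.

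\emph{Phase 2 (combining over $X$ using $\kappa < \a$).} The infinite members of $\left\{ {E}_{\alpha}: \alpha < \kappa \right\}$ form an almost disjoint family of size at most $\kappa < \a$ on $X$. If this family is infinite, it is not maximal; if it is finite, then --- provided the $C_x$ have been chosen so that for every finite $F \subseteq \kappa$ there are infinitely many $x$ with ${S}_{x} \cap F = \emptyset$ --- the union of its members is co-infinite in $X$. In either case there is an infinite $Y \subseteq X$ with $Y \cap {E}_{\alpha}$ finite for every $\alpha < \kappa$ (trivially so when ${E}_{\alpha}$ is finite). Put $B = \coprod_{x \in Y}{C_x} \subseteq D$. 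Every $C_x \notin {\I}_{x}$, so $\spt(B) = Y$ and $B \notin \J$; and for each $\alpha$ one has $\left( B \cap {A}_{\alpha} \right)(x) = C_x \cap {A}_{\alpha}(x)$ for $x \in Y$ (and $= \emptyset$ otherwise), which is $\notin {\I}_{x}$ exactly when $\alpha \in {S}_{x}$, so $\spt\left( B \cap {A}_{\alpha} \right) = Y \cap {E}_{\alpha}$ is finite and $B \cap {A}_{\alpha} \in \J$. This $B$ is as required.

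The main obstacle is the bookkeeping left open in Phase 1: one must choose, along an enumeration of the countable set $X$, the coordinate-wise witnesses $C_x$ --- each constrained to be inside or outside every ${A}_{\alpha}(x)$ --- so that the almost disjoint family $\left\{ {E}_{\alpha} \right\}$ produced on $X$ admits an infinite almost-complement, i.e. does not degenerate to a finite subfamily whose union is cofinite in $X$. The resource making this possible is that $\seq{a}{\alpha}{<}{\kappa}$, being an infinite antichain in $\Pset(D) \slash \J$, has no finite subfamily with supremum $1$, so $D \setminus \bigcup_{\alpha \in F}{A}_{\alpha} \notin \J$ for every finite $F \subseteq \kappa$; equivalently $\left\{ x \in X: D_x \setminus \bigcup_{\alpha \in F}{A}_{\alpha}(x) \notin {\I}_{x} \right\}$ is infinite, giving room at infinitely many coordinates to steer $C_x$ away from the members of any given finite $F$. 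Organizing this steering so that it succeeds for every relevant finite $F$ --- with $\kappa$ possibly uncountable but only $\omega$ coordinates available --- is the crux of the argument.
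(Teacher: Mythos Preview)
Your overall strategy is correct and essentially matches the paper's: in each coordinate $x$ produce an ${\I}_{x}$-positive $C_x$ unsplit by $\{A_\alpha(x)\}_{\alpha<\kappa}$; transfer the resulting almost disjoint family $\{E_\alpha\}_{\alpha<\kappa}$ to $X$; use $\kappa < \a$ to find an infinite $Y \subseteq X$ with $Y \cap E_\alpha$ finite for every $\alpha$; and set $B = \coprod_{x \in Y} C_x$. The verifications in Phase~2 are fine. The gap is exactly where you place it, and it is genuine: you need the subfamily $\{E_\alpha : E_\alpha \text{ infinite}\}$ either to be infinite (so that $\kappa<\a$ applies) or to have co-infinite union, and you do not show how to arrange either. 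Your proposed resource --- steering $C_x$ into $D_x \setminus \bigcup_{\alpha \in F} A_\alpha(x)$ --- does secure infinitely many $x$ with $S_x \cap F = \emptyset$ for any \emph{single} finite $F$, but you would need this for the unknown $F_0 = \{\alpha : E_\alpha \text{ infinite}\}$, which depends on all the choices of $C_x$, and with $\kappa$ uncountable there is no evident way to diagonalize over all finite $F \subseteq \kappa$ in only $\omega$ steps.

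The paper resolves this with the \emph{opposite} maneuver: rather than steering $C_x$ away from the $A_\alpha(x)$, steer it \emph{into} one of them so as to force infinitely many $E_\alpha$ to be infinite. Concretely, first fix pairwise disjoint infinite sets $Y_n \subseteq \spt(A_n)$ for $n<\omega$ (possible since each $\spt(A_n)$ is infinite), set $Y = \bigcup_{n<\omega} Y_n$, and only define $C_x$ for $x \in Y$: for $x \in Y_n$ use the freedom you already noted in Phase~1 to take $C_x \subseteq A_n(x)$, which is ${\I}_{x}$-positive since $x \in \spt(A_n)$. Then automatically $n \in S_x$ for every $x \in Y_n$, so $Y_n \subseteq E_n$ and hence $E_n$ is infinite for every $n<\omega$. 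Now the family of infinite $E_\alpha$'s is guaranteed infinite, $\kappa < \a$ applies directly, and your Phase~2 runs without any case distinction. This one-line device is the missing idea.
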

\begin{proof}
 Choose ${A}_{\alpha} \in {\left( \Pset(D) \slash \J \right)}^{+}$ so that ${\[ {A}_{\alpha} \]}_{\Pset(D) \slash \J} = {a}_{\alpha}$.
 Let ${X}_{\alpha} = \spt({A}_{\alpha}) \subseteq X$ and note that ${X}_{\alpha} \notin {\FIN}_{X}$.
 Thus for each $\alpha < \kappa$, $\lc {X}_{\alpha} \rc = {\aleph}_{0}$.
 Find a family $\{{Y}_{n}: n \in \omega\}$ so that ${Y}_{n}$ is infinite and ${Y}_{n} \subseteq {X}_{n}$, for all $n \in \omega$, and $\forall n < m < \omega\[ {Y}_{n} \cap {Y}_{m} = \emptyset \]$.
 Define $Y = {\bigcup}_{n \in \omega}{{Y}_{n}}$.
 Fix $x \in Y$ and let ${n}_{x}$ be the unique $n \in \omega$ so that $x \in {Y}_{n}$.
 Then by definition, $x \in X$, ${A}_{{n}_{x}}(x) \subseteq {D}_{x}$, and ${A}_{{n}_{x}}(x) \notin {\I}_{x}$.
 The assumption that $\kappa < {\s}_{\Pset\left( {A}_{{n}_{x}}(x) \right) \slash \left( {\I}_{x} \restrict {A}_{{n}_{x}}(x) \right)}$ implies that $\left\{ {\[{A}_{{n}_{x}}(x) \cap {A}_{\alpha}(x) \]}_{\Pset\left( {A}_{{n}_{x}}(x) \right) \slash \left( {\I}_{x} \restrict {A}_{{n}_{x}}(x) \right)}: \alpha < \kappa \right\}$ is not a splitting family in $\Pset\left( {A}_{{n}_{x}}(x) \right) \slash \left( {\I}_{x} \restrict {A}_{{n}_{x}}(x) \right)$.
 So there is ${B}_{x} \subseteq {A}_{{n}_{x}}(x)$ such that ${B}_{x} \notin {\I}_{x}$ and for any $\alpha < \kappa$, either ${B}_{x} \: {\subseteq}_{{\I}_{x}} \: {A}_{{n}_{x}}(x) \cap {A}_{\alpha}(x)$ or ${B}_{x} \: {\subseteq}_{{\I}_{x}} \: {A}_{{n}_{x}}(x) \setminus {A}_{\alpha}(x)$.
 Now for each $\alpha < \kappa$, define ${Z}_{\alpha} = \left\{ x \in Y: {B}_{x} \: {\subseteq}_{{\I}_{x}} \: {A}_{{n}_{x}}(x) \cap {A}_{\alpha}(x) \right\}$.
 Although the following claim is simple, it plays an important role.
 \begin{claim} \label{claim:main1}
  For each $n \in \omega$, ${Y}_{n} \subseteq {Z}_{n}$.
 \end{claim}
 \begin{proof}
  $x \in {Y}_{n} \implies {n}_{x} = n$, and by choice of ${B}_{x}$, ${B}_{x} \subseteq {A}_{n}(x) = {A}_{n_x}(x) \cap {A}_{n}(x)$.
  Hence $x \in Y$ and ${B}_{x} \: {\subseteq}_{{\I}_{x}} \: {A}_{{n}_{x}}(x) \cap {A}_{n}(x)$, whence $x \in {Z}_{n}$ by definition.
 \end{proof}
 \begin{claim} \label{claim:main2}
  For all $\alpha < \beta < \kappa$, ${Z}_{\alpha} \cap {Z}_{\beta}$ is finite.
 \end{claim}
 \begin{proof}
  Suppose for a contradiction that ${Z}_{\alpha} \cap {Z}_{\beta}$ is infinite.
  Consider $x \in {Z}_{\alpha} \cap {Z}_{\beta}$.
  Then $x \in X$, ${B}_{x} \: {\subseteq}_{{\I}_{x}} \: {A}_{{n}_{x}}(x) \cap {A}_{\alpha}(x)$ and ${B}_{x} \: {\subseteq}_{{\I}_{x}} \: {A}_{{n}_{x}}(x) \cap {A}_{\beta}(x)$.
  Hence ${B}_{x} \: {\subseteq}_{{\I}_{x}} \: {A}_{\alpha}(x) \cap {A}_{\beta}(x) \subseteq {A}_{\alpha}(x) \subseteq {D}_{x}$.
  As ${\I}_{x}$ is an ideal on ${D}_{x}$ and ${B}_{x} \notin {\I}_{x}$, ${A}_{\alpha}(x) \cap {A}_{\beta}(x) \notin {\I}_{x}$.
  So $\left( {A}_{\alpha} \cap {A}_{\beta} \right) (x) \notin {\I}_{x}$.
  Thus $x \in \spt \left( {A}_{\alpha} \cap {A}_{\beta} \right)$.
  It has been shown that ${Z}_{\alpha} \cap {Z}_{\beta} \subseteq \spt \left( {A}_{\alpha} \cap {A}_{\beta} \right)$, whence $\spt \left( {A}_{\alpha} \cap {A}_{\beta} \right)$ is infinite.
  So ${A}_{\alpha} \cap {A}_{\beta} \notin \J$.
  However this implies ${a}_{\alpha} \wedge {a}_{\beta} = {\[ {A}_{\alpha} \]}_{\Pset(D) \slash \J} \wedge {\[ {A}_{\beta} \]}_{\Pset(D) \slash \J} = {\[ {A}_{\alpha} \cap {A}_{\beta} \]}_{\Pset(D) \slash \J} > 0$, contradicting the almost disjointness of $\seq{a}{\xi}{<}{\kappa}$.
 \end{proof}
 Let $T = \{ \alpha < \kappa: {Z}_{\alpha} \ \text{is infinite} \}$.
 By Claim \ref{claim:main1} and by the fact that each ${Y}_{n}$ is infinite, $\omega \subseteq T$.
 Therefore by Claim \ref{claim:main2}, $\FF = \{ {Z}_{\alpha}: \alpha \in T \}$ is an infinite almost disjoint family of infinite subsets of $Y$ with $\lc \FF \rc \leq \lc T \rc \leq \kappa < \a$.
 As $Y$ is a countably infinite set, fix $Z \subseteq Y$ infinite with $\lc Z \cap {Z}_{\alpha} \rc < {\aleph}_{0}$ for all $\alpha < \kappa$.
 Define $B = \upp{x}{Z}{B}$.
 Now for each $x \in Z$, ${B}_{x} \notin {\I}_{x}$.
 Hence $Z \subseteq \spt(B)$.
 As $Z$ is infinite, $\spt(B)$ is infinite as well and so $B \notin \J$.
 \begin{claim} \label{claim:main3}
  ${\[ B \]}_{\Pset(D) \slash \J}$ is $\Pset(D) \slash \J$-almost disjoint to $\{{a}_{\alpha}: \alpha < \kappa\}$.
 \end{claim}
 \begin{proof}
  As $B \subseteq D$ and $B \notin \J$, $b = {\[ B \]}_{\Pset(D) \slash \J} > 0$.
  Fix $\alpha < \kappa$.
  Then $b \wedge {a}_{\alpha} = {\[ B \cap {A}_{\alpha} \]}_{\Pset(D) \slash \J}$.
  Consider any $x \in \spt \left( B \cap {A}_{\alpha} \right)$.
  Then $x \in X$ and $B(x) \cap {A}_{\alpha}(x) \notin {\I}_{x}$.
  In particular, $x \in Z$ because otherwise $B(x) = \emptyset \in {\I}_{x}$.
  Thus $x \in Y$.
  Now assume for a contradiction that $x \notin {Z}_{\alpha}$.
  Then ${B}_{x} \: {\subseteq}_{{\I}_{x}} \: {A}_{{n}_{x}}(x) \setminus {A}_{\alpha}(x)$.
  However as ${B}_{x} \subseteq {A}_{{n}_{x}}(x) \subseteq {D}_{x}$, it follows that $B(x) \cap {A}_{\alpha}(x) \in {\I}_{x}$, which is a contradiction.
  This contradiction shows that $x \in Z \cap {Z}_{\alpha}$.
  Thus it has been shown that $\spt \left( B \cap {A}_{\alpha} \right) \subseteq Z \cap {Z}_{\alpha}$.
  As $\alpha < \kappa$, $Z \cap {Z}_{\alpha}$ is finite.
  So $\spt \left( B \cap {A}_{\alpha} \right)$ is finite.
  Therefore $B \cap {A}_{\alpha} \in \J$, whence $b \wedge {a}_{\alpha} = 0$.
 \end{proof}
 Claim \ref{claim:main3} shows that $\seq{a}{\alpha}{<}{\kappa}$ is not maximal.
\end{proof}
\begin{corol} \label{cor:main1}
 For each $\alpha < {\omega}_{1}$, ${\a}_{\alpha} \geq \min\{\a, \s\}$.
\end{corol}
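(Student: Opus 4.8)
The plan is to derive Corollary \ref{cor:main1} from Theorem \ref{thm:main} together with the structural facts already established for the ideals $\FIN(\bar s,\alpha)$. Fix $\bar s$ once and for all and argue by induction on $\alpha<\omega_1$, showing that $\a_\alpha\geq\min\{\a,\s\}$ for every $\alpha$. Write $\mu=\min\{\a,\s\}$. The base case $\alpha=0$ is immediate, since $\a_0=\a\geq\mu$ (indeed $\Pset(D(\bar s,0))/\FIN(\bar s,0)=\Pset(\omega)/\FIN_\omega$). For the inductive step there is nothing to do at limit stages \emph{per se}, because by Definition \ref{def:Dfinalpha} every $\alpha>0$ — successor or limit — presents $D(\bar s,\alpha)$ in the form $\coprod_{n\in\omega}D_n$ and $\FIN(\bar s,\alpha)$ in the form $\coprod_{\FIN_\omega}\I_n$, where each $D_n$ equals $D(\bar s,\xi)$ and each $\I_n$ equals $\FIN(\bar s,\xi)$ for some $\xi<\alpha$ (namely $\xi$ or $s_\alpha(n)$); so a single argument handles all $\alpha>0$ uniformly.

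The key step is to check that the hypotheses of Theorem \ref{thm:main} are met with $X=\omega$, with $\langle D_n:n\in\omega\rangle$ and $\langle\I_n:n\in\omega\rangle$ as above, and with any cardinal $\kappa<\mu$. First, each $\I_n$ has the non-maximality-on-restrictions property: this is exactly item (1) of Corollary \ref{cor:salpha} applied at the smaller ordinal $\xi$. Second, $\kappa<\a$ is immediate from $\kappa<\mu\leq\a$. Third, and crucially, for each $n$ and each $A\subseteq D_n$ with $A\notin\I_n$ we need $\kappa<\s_{\Pset(A)/(\I_n\restrict A)}$; but $\I_n=\FIN(\bar s,\xi)$, so by Corollary \ref{cor:sissame} we have $\s_{\Pset(A)/(\FIN(\bar s,\xi)\restrict A)}=\s\geq\mu>\kappa$. (Note this uses Corollary \ref{cor:sissame} directly rather than the induction hypothesis, so the induction is really only needed to organize the base case against the general $\alpha>0$ case — in fact the whole statement could be proved without induction at all, simply invoking Corollary \ref{cor:sissame}.) With all hypotheses verified, Theorem \ref{thm:main} tells us that no $(\Pset(D(\bar s,\alpha))/\FIN(\bar s,\alpha))$-almost disjoint sequence of length $\kappa$ is maximal; hence every infinite maximal antichain in $\Pset(D(\bar s,\alpha))/\FIN(\bar s,\alpha)$ has size $>\kappa$. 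Since this holds for every $\kappa<\mu$, we get $\a_\alpha\geq\mu=\min\{\a,\s\}$, completing the step.

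There is essentially no obstacle here: all the real work lives in Theorem \ref{thm:main}, Corollary \ref{cor:salpha}, and Corollary \ref{cor:sissame}. The only mild point of care is the bookkeeping that $\a_\alpha$ is defined via minimal \emph{infinite maximal antichains} (Definition \ref{def:a}), whereas Theorem \ref{thm:main} speaks of almost disjoint sequences; one should note that an infinite maximal antichain in $\Pset(D)/\J$ of size $\leq\kappa$ would in particular be an infinite (hence $\kappa$-indexed, after padding if necessary) almost disjoint family that is maximal, which Theorem \ref{thm:main} forbids once $\kappa<\min\{\a,\s\}$. This is a routine matching of definitions and is the thing I would state carefully but not belabor.
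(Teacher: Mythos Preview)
Your proposal is correct and follows essentially the same route as the paper: split off $\alpha=0$, and for $\alpha>0$ use the product decomposition together with Corollary \ref{cor:salpha} and Corollary \ref{cor:sissame} to verify the hypotheses of Theorem \ref{thm:main}. The paper does this without the inductive framing (as you yourself observe is possible) and phrases it as a contradiction by setting $\kappa={\a}_{\alpha}$ directly, which also sidesteps your ``padding'' remark --- that remark is the one slightly off note, since you cannot pad an antichain, but the intended argument (apply Theorem \ref{thm:main} with $\kappa$ equal to the actual size of a putative small maximal antichain) is fine.
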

\begin{proof}
 Let $\kappa = {\a}_{\alpha}$, which is always an infinite cardinal.
 When $\alpha = 0$, $\kappa = \a \geq \min\{\a, \s\}$.
 When $\alpha > 0$, $D(\bar{s}, \alpha)$ has the form $\upp{n}{\omega}{D}$ and $\FIN(\bar{s}, \alpha)$ has the form $\upi{{\FIN}_{\omega}}{\I}{n}$, where each ${D}_{n}$ is an infinite set and ${\I}_{n}$ is an ideal on ${D}_{n}$.
 Furthermore, each ${D}_{n}$ has the form $D(\bar{s}, \xi)$ and ${\I}_{n}$ has the form $\FIN(\bar{s}, \xi)$.
 By Corollary \ref{cor:salpha}, for any $A \subseteq {D}_{n}$, if $A \notin {\I}_{n}$, then ${\I}_{n} \restrict A$ is not maximal on $A$.
 Now assume for a contradiction that $\kappa < \min\{\a, \s\}$.
 Then by Corollary \ref{cor:sissame}, for any $A \subseteq {D}_{n}$ with $A \notin {\I}_{n}$, $\kappa < \s = {\s}_{\Pset(A) \slash \left( {\I}_{n} \restrict A \right)}$.
 Therefore the hypotheses of Theorem \ref{thm:main} are satisfied and it implies that there are no maximal almost disjoint families of size $\kappa$ in $\Pset(D(\bar{s}, \alpha)) \slash \FIN(\bar{s}, \alpha)$, contradicting the definition of $\kappa$.
\end{proof}
The next theorem, which easily follows from Theorem \ref{thm:main} and from the work of Shelah in \cite{MR763901}, answers Brendle's question about the consistency of ${\aleph}_{1} = \b < {\a}_{1} = {\aleph}_{2}$.
\begin{theor} \label{label:baleph1}
 It is consistent that $\b = {\aleph}_{1}$ and for each $\alpha < {\omega}_{1}$, ${\a}_{\alpha} = {\aleph}_{2}$.
\end{theor}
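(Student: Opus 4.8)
The plan is to bracket each ${\a}_{\alpha}$ between $\min\{\a,\s\}$ and $\a$ using the two corollaries already proved, and then to quote a single model of Shelah in which these two bounds coincide at ${\aleph}_{2}$ while $\b = {\aleph}_{1}$. By Corollary \ref{cor:main1} one has ${\a}_{\alpha} \geq \min\{\a,\s\}$ for every $\alpha < {\omega}_{1}$, and by Corollary \ref{cor:aupper} one has ${\a}_{\alpha} \leq \a$ for every $\alpha < {\omega}_{1}$. Hence in any model of $\ZFC$ satisfying $\b = {\aleph}_{1}$ and $\a = \s = {\aleph}_{2}$ one automatically gets ${\aleph}_{2} = \min\{\a,\s\} \leq {\a}_{\alpha} \leq \a = {\aleph}_{2}$ for every $\alpha < {\omega}_{1}$; that is, ${\a}_{\alpha} = {\aleph}_{2}$ for all $\alpha < {\omega}_{1}$ while $\b = {\aleph}_{1}$. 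So the whole theorem reduces to exhibiting a single model of $\b = {\aleph}_{1}$ together with $\a = \s = {\aleph}_{2}$.

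Such a model is furnished by Shelah's consistency proof of $\b < \a$ in \cite{MR763901}. Over a ground model of $\GCH$, Shelah iterates, with finite support and of length ${\omega}_{2}$, a ccc forcing whose steps successively destroy the maximality of all prescribed infinite almost disjoint families, while the accompanying preservation theorem ensures that no dominating real is added over the ground model; in the extension $\c = {\aleph}_{2}$, $\b = {\aleph}_{1}$, and $\a = {\aleph}_{2}$. The one point that needs checking is that $\s = {\aleph}_{2}$ as well (equivalently, since $\s \leq \c = {\aleph}_{2}$, that $\s > {\aleph}_{1}$). By the ccc, any family of ${\aleph}_{1}$ subsets of $\omega$ in the final extension already appears at some stage $\xi < {\omega}_{2}$, so it suffices to know that cofinally many of Shelah's iterands add a real not split by any stage-$\xi$ set; I would verify this directly from the definition of the iterands. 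Should that prove awkward, an equally good route is to interleave the iteration with cofinally many steps of an $\omega^{\omega}$-bounding forcing that diagonalizes ${\aleph}_{1}$-sized families of subsets of $\omega$: the ``no dominating real'' preservation needed for $\b = {\aleph}_{1}$ and the bookkeeping that kills all small almost disjoint families for $\a = {\aleph}_{2}$ are both compatible with such interleaving.

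Putting the two paragraphs together proves the theorem, since the model of the second paragraph meets the hypotheses isolated in the first. The only genuinely delicate step is confirming that $\s = {\aleph}_{2}$ in Shelah's model — equivalently, checking that the lower bound of Corollary \ref{cor:main1} is not undercut by $\s$ collapsing to ${\aleph}_{1}$; everything else is the formal sandwich above and requires no new forcing construction. Reading the conclusion at the single index $\alpha = 1$ recovers a positive answer to Brendle's question on the consistency of ${\aleph}_{1} = \b < {\a}_{1} = {\aleph}_{2}$, and, unlike the template iterations behind Theorem \ref{thm:Brend}, this argument does produce the value $\b = {\aleph}_{1}$.
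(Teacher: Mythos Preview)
Your approach is essentially the same as the paper's: sandwich each ${\a}_{\alpha}$ between $\min\{\a,\s\}$ and $\a$ via Corollaries \ref{cor:main1} and \ref{cor:aupper}, then quote Shelah's model from \cite{MR763901}. The only difference is that you treat $\s = {\aleph}_{2}$ in that model as something still to be checked or engineered, whereas the paper simply cites it as an established feature of Shelah's construction (the paper records $\b = {\aleph}_{1}$ and $\a = \s = \c = {\aleph}_{2}$ there); so the hedging in your second paragraph and the proposed interleaving are unnecessary.
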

\begin{proof}
 In \cite{MR763901}, Shelah produced a model in which $\b = {\aleph}_{1}$ and $\a = \s = \c = {\aleph}_{2}$.
 So Theorem \ref{thm:main} says that the necessary configuration holds in Shelah's model.
\end{proof}
In \cite{mob2}, Brendle found a c.c.c.\@ forcing closely related to Shelah's creature forcing from \cite{MR763901} to produce models where $\b$ is small while $\a$ and $\s$ are both larger.
In fact, Brendle~\cite{mob2} showed that for any regular $\kappa$, there is a model with $\b = \kappa < {\kappa}^{+} = \a = \s$.
Brendle's methods from \cite{mob2} yield the following consistency result.
\begin{corol}\label{label:largea}
 Let $\kappa$ be a regular uncountable cardinal.
 It is consistent with $\ZFC$ that $\kappa = \b$ and for each $\alpha < {\omega}_{1}$, ${\a}_{\alpha} = \s = \c = {\kappa}^{+}$.
\end{corol}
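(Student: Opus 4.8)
The plan is to follow verbatim the template of the proof of Theorem~\ref{label:baleph1}, substituting for Shelah's model from \cite{MR763901} the models produced by Brendle in \cite{mob2}. Recall that, as noted in the discussion preceding the statement, Brendle's c.c.c.\@ forcing from \cite{mob2} — a variant of Shelah's creature forcing from \cite{MR763901} — yields, for every regular uncountable $\kappa$ (starting from a suitable ground model, e.g.\@ one with $2^{<\kappa} = \kappa$), a c.c.c.\@ extension in which $\b = \kappa$ while $\a = \s = {\kappa}^{+}$. Since the forcing has size ${\kappa}^{+}$ and adds reals, one also has $\c = {\kappa}^{+}$ there: indeed ${\kappa}^{+} = \a \leq \c$, while a standard nice-names count keeps $\c \leq {\kappa}^{+}$. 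Fix this extension and work in it.

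In this model $\min\{\a, \s\} = {\kappa}^{+}$. By Corollary~\ref{cor:main1}, ${\a}_{\alpha} \geq \min\{\a, \s\} = {\kappa}^{+}$ for every $\alpha < {\omega}_{1}$; and by Corollary~\ref{cor:aupper}, ${\a}_{\alpha} \leq \a = {\kappa}^{+}$ for every $\alpha < {\omega}_{1}$. Combining the two inequalities gives ${\a}_{\alpha} = {\kappa}^{+} = \s = \c$ for all $\alpha < {\omega}_{1}$, while $\b = \kappa$, which is precisely the assertion of the corollary. Thus the corollary is simply the conjunction of Brendle's consistency result with the $\ZFC$ bounds $\min\{\a,\s\} \leq {\a}_{\alpha} \leq \a$ already established above.

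The only point requiring attention — and the main, minor, obstacle — is to confirm from \cite{mob2} that Brendle's construction really does force $\s = {\kappa}^{+}$ and $\c = {\kappa}^{+}$ simultaneously with $\b = \kappa$ and $\a = {\kappa}^{+}$ (in particular that the iteration or product used to blow up $\a$ and $\s$ to ${\kappa}^{+}$ still leaves the bounding number equal to $\kappa$), and to record which ground-model hypothesis Brendle assumes. Once that input is in hand, no further set-theoretic work on the ideals $\FIN(\bar{s}, \alpha)$ is needed: everything about the quotients $\Pset(D(\bar{s}, \alpha)) \slash \FIN(\bar{s}, \alpha)$ that is used has already been packaged into Theorem~\ref{thm:main} and its corollaries, exactly as in the proof of Theorem~\ref{label:baleph1}.
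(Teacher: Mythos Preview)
Your proposal is correct and follows exactly the approach the paper intends: invoke Brendle's model from \cite{mob2} where $\b = \kappa$ and $\a = \s = \c = {\kappa}^{+}$, and then apply the $\ZFC$ bounds $\min\{\a,\s\} \leq {\a}_{\alpha} \leq \a$ from Corollaries~\ref{cor:main1} and~\ref{cor:aupper}. The paper itself leaves this corollary without a written proof, simply recording it as a consequence of Brendle's methods together with Theorem~\ref{thm:main}; your write-up is precisely the intended argument, mirroring the proof of Theorem~\ref{label:baleph1}.
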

Brendle and Khomskii~\cite{perfectmad} introduced the cardinal $\ac$, which is the least $\kappa$ such that there are $\kappa$ many closed subsets of $\cube$ whose union is a m.a.d.\@ family in $\Pset(\omega) \slash {\FIN}_{\omega}$.
Brendle and Khomskii showed in \cite{perfectmad} that $\ac = {\aleph}_{1} < {\aleph}_{2} = \b$ holds in the Hechler model.
Since $\b \leq {\a}_{\alpha}$, this shows the consistency of $\ac < {\a}_{\alpha}$ for any $\alpha < {\omega}_{1}$.
However the following seems unclear.
\begin{quest} \label{q:ac}
 Is there some $\alpha < {\omega}_{1}$ for which ${\a}_{\alpha} < \ac$ is consistent?
\end{quest}
Brendle and Raghavan~\cite{bsa} adapted the arguments of Shelah~\cite{MR763901} and Brendle~\cite{mob2} to produce models of $\b < \ac$.
Their models show the following.
\begin{corol} \label{cor:ac}
 Let $\kappa$ be a regular uncountable cardinal.
 It is consistent with $\ZFC$ that $\kappa = \b$ and for each $\alpha < {\omega}_{1}$, $\ac = {\a}_{\alpha} = \s = \c = {\kappa}^{+}$.
\end{corol}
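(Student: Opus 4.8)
The plan is to combine the $\ZFC$ bounds already available — Corollary~\ref{cor:main1}, which is the consequence of Theorem~\ref{thm:main}, and Corollary~\ref{cor:aupper} — with the forcing construction of Brendle and Raghavan~\cite{bsa}. First I would recall that model: for a regular uncountable cardinal $\kappa$, \cite{bsa} (adapting Shelah's creature forcing from~\cite{MR763901} together with Brendle's variant from~\cite{mob2}) produces a generic extension $\VG$ of $\ZFC$ in which $\b = \kappa$ while $\ac = \s = \c = \kappa^{+}$. Since $\ac \leq \a$ always holds — a m.a.d.\@ family of size $\a$ is a union of $\a$ singletons, each of which is closed in $\cube$ — and since $\a \leq \c$ trivially, one gets $\a = \s = \ac = \c = \kappa^{+}$ in $\VG$.

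Working inside $\VG$, it then remains only to pin down ${\a}_{\alpha}$ for each $\alpha < {\omega}_{1}$. Because $\min\{\a,\s\} = \kappa^{+}$, Corollary~\ref{cor:main1} gives ${\a}_{\alpha} \geq \kappa^{+}$, and Corollary~\ref{cor:aupper} gives ${\a}_{\alpha} \leq \a = \kappa^{+}$; hence ${\a}_{\alpha} = \kappa^{+}$ for every $\alpha < {\omega}_{1}$. Together with $\b = \kappa$ and $\ac = \s = \c = \kappa^{+}$, this is exactly the configuration asserted, so the corollary follows.

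There is essentially no genuine obstacle beyond what the main theorem already supplies: the two corollaries are immediate, and the forcing is quoted wholesale from~\cite{bsa}. The one point worth stressing is \emph{why} Corollary~\ref{cor:main1} yields the lower bound $\kappa^{+}$ rather than merely the trivial $\b = \kappa$: it is precisely because the model of~\cite{bsa} keeps $\s$ large, i.e. $\s = \kappa^{+}$, a feature inherited from Shelah's creature forcing in~\cite{MR763901} (which increases the splitting number) and preserved under the modifications of~\cite{mob2} and~\cite{bsa}. Verifying that this largeness of $\s$ coexists with $\b = \kappa$ and $\ac = \kappa^{+}$ in the construction of~\cite{bsa} is the only non-bookkeeping ingredient, and it is exactly what that paper establishes.
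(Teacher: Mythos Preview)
Your proposal is correct and follows precisely the route the paper intends: the paper does not give an explicit proof but simply states that the models of Brendle and Raghavan~\cite{bsa} (where $\b = \kappa$ and $\ac = \s = \c = {\kappa}^{+}$) witness the configuration, leaving implicit the application of Corollary~\ref{cor:main1} for the lower bound and Corollary~\ref{cor:aupper} for the upper bound on ${\a}_{\alpha}$. Your additional observation that $\ac \leq \a \leq \c$ pins down $\a = {\kappa}^{+}$ and makes the implicit reasoning fully explicit.
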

None of the presently available techniques seem to produce a model where $\b = {\aleph}_{1}$ and $\a$ is larger than ${\aleph}_{2}$.
So the following basic question seems to be open.
\begin{quest} \label{q:smallblargea}
 Is it consistent to have $\b = {\aleph}_{1}$ and $\a > {\aleph}_{2}$?
\end{quest}
For $\alpha > 0$, Theorem \ref{thm:main} contains some information about the possible techniques that could be used to exhibit a model of ${\a}_{\alpha} < \a$.
$\b < \a$ must hold in any such model and three essentially different techniques are presently known for getting models with $\b < \a$.
The first class of techniques derive from Shelah's method in \cite{MR763901} and its c.c.c.\@ version discovered by Brendle~\cite{mob2}.
All of these techniques increase $\s$ in addition to $\a$ and they tend to produce models where $\a = \s = \c$, and so by Theorem \ref{thm:main}, they are unsuitable for ${\a}_{\alpha} < \a$.
This class of techniques remains the only currently available one for getting ${\aleph}_{1} = \b < \a$.
Hence it will be necessary to solve the following open problem in order to show the consistency of ${\aleph}_{1} = {\a}_{\alpha} < \a$ for some $\alpha < {\omega}_{1}$.
\begin{quest}[Brendle and Raghavan]\label{q:BR}
 Is it consistent to have ${\aleph}_{1} = \b = \s < \a$?
\end{quest}
It is consistent to have ${\aleph}_{2} = \b = \s < \a$.
In fact, Shelah~\cite{sh:700} showed the consistency of $\d < \a$ and he invented two different techniques for this.
The first method involves a measurable cardinal and the ultrapower of a well chosen forcing notion.
The second is the method for iteration along a template.
However neither of these techniques will produce a model of ${\a}_{\alpha} < \a$ because the argument which shows that $\a$ is increased by these forcings will also apply to ${\a}_{\alpha}$.
It should be noted that unlike the case for $\a$, Raghavan and Shelah~\cite{borelmadness} showed that $\ac = {\aleph}_{1}$ if $\d = {\aleph}_{1}$.
\end{document}